\documentclass[a4paper,11pt]{article}
\usepackage[utf8]{inputenc}
\usepackage[T1]{fontenc}
\usepackage{textcomp}
\usepackage{fancyhdr}
\usepackage{fullpage}
\usepackage{lmodern}
\usepackage{amsmath,amssymb,bm,bbm}
\usepackage{algorithm}
\usepackage{algorithmic}
\usepackage{float}
\usepackage{graphicx}
\usepackage{extarrows}
\usepackage{caption}
\usepackage{graphicx, subfig}
\usepackage{titling}
\usepackage{yhmath}
\usepackage{mathrsfs}
\usepackage{cite}
\usepackage{footnote}
\usepackage{amsthm}
\usepackage{color}
\usepackage{slashed}
\usepackage{verbatim}
\usepackage[colorlinks=true,linkcolor=red,citecolor=blue]{hyperref}
\usepackage{amsfonts,euscript}
\usepackage{latexsym, multicol}
\usepackage{epstopdf}
\usepackage{psfrag}
\usepackage{mathrsfs}
\usepackage{xcolor}
\usepackage{comment}
\usepackage{authblk}
\usepackage{indentfirst}
\usepackage{lipsum}
\DeclareFontFamily{U}{mathx}{\hyphenchar\font45}
\DeclareFontShape{U}{mathx}{m}{n}{
<5> <6> <7> <8> <9> <10>
<10.95> <12> <14.4> <17.28> <20.74> <24.88>
mathx10}{}

\DeclareSymbolFont{mathx}{U}{mathx}{m}{n}
\DeclareFontSubstitution{U}{mathx}{m}{n}
\DeclareMathAccent{\widecheck}{0}{mathx}{"71}
\numberwithin{equation}{section}
\allowdisplaybreaks[3]

\def\etabf{\bm{\eta}}

\def\RRR{\mathbb{R}}
\def\SSS{\mathbb{S}}
\def\xja{\langle x\rangle}

\def\NNN{\mathbb{N}}
\def\ges{\gtrsim}
\renewcommand{\c}{\cdot}

\newcommand{\DD}{\mathscr{D}}
\newcommand{\pa}{\partial}
\newcommand{\pr}{\pa}
\newcommand{\unu}{{\underline{u}}}
\newcommand{\unc}{{\underline{C}}}

\newcommand{\bg}{\mathbf{g}}

\newcommand{\M}{\mathcal{M}}

\newcommand{\Ric}{{\ric}}

\newcommand{\g}{\bg}
\newcommand{\R}{{\mathbf{R}}}

\renewcommand{\div}{\sdiv}

\def\ub{\unu}

\def\Cb{\unc}
\def\om{\omega}

\def\Om{\Omega}

\def\Si{\Sigma}
\def\si{\sigma}

\def\Ga{\Gamma}

\def\a{\alpha}
\def\b{\beta}

\def\de{\delta}
\def\De{\Delta}
\def\nab{\nabla}
\def\ov{\overline}

\def\ep{\varepsilon}
\def\les{\lesssim}

\def\th{\theta}

\DeclareMathOperator{\supp}{supp}

\DeclareMathOperator{\tr}{tr}
\DeclareMathOperator{\sdiv}{div}

\def\ric{\mathbf{Ric}}
\newtheorem{thm}{Theorem}[section]
\newtheorem{prop}[thm]{Proposition}
\newtheorem{lem}[thm]{Lemma}
\newtheorem{cor}[thm]{Corollary}
\newtheorem{rk}[thm]{Remark}
\newtheorem{df}[thm]{Definition}

\title{Vacuum initial data with minimal decay and borderline decay}
\author{Dawei Shen and Jingbo Wan}
\begin{document}
\maketitle
\begin{abstract}
In this note, we show that the conical solution-operator method of Mao-Tao \cite{MaoTao} applies to a simple construction of \emph{vacuum} asymptotically flat initial data at minimal and borderline decay thresholds, corresponding to the global and exterior stability of Minkowski spacetime proved by the first named author in \cite{Shen23,Shen24}.
\end{abstract}
% \tableofcontents
\section{Introduction}
The Einstein vacuum equations read as follows:
\begin{equation*}
    \Ric(\M,\g)=0,
\end{equation*}
where $(\M,\g)$ is a Lorentzian manifold and $\Ric(\M,\g)$ is the Ricci curvature. When we restrict to a spacelike hypersurface, we get the following constraint equations:
\begin{align}
    \begin{split}\label{MT1.1}
    R+(\tr k)^2-|k|^2&=0,\\
    \div(k-(\tr k) g)&=0.
    \end{split}
\end{align}
We begin by introducing the notion of \emph{$(s,q)$--asymptotically flat initial data of size $\ep_0$}; see also \cite[Definition~1.2]{Shen23, Shen24} for closely related definitions.\footnote{The notion of $(s,q)$--asymptotically flat initial data used here differs slightly from that of \cite{Shen23, Shen24}. In particular, we explicitly introduce the size parameter $\ep_0$ and replace the little--$o$ decay assumptions in \cite{Shen23, Shen24} by big--$O$ bounds.}
\begin{df}\label{defasym}
Given $s\geq 1$, $q\in\NNN$ and let $0<\ep_0\ll 1$, we say that a \emph{vacuum} initial data set $(\Si_0,g,k)$ is $(s,q)$--asymptotically flat with size $\ep_0$ if it solves \eqref{MT1.1} and the following properties hold:
\begin{enumerate}
    \item There exists a coordinate system $(x^1,x^2,x^3)$ defined outside a sufficiently large compact set such that:\footnote{The notation $f =O(1)$ means that $f$ is bounded when $r\to\infty$. Moreover, for any $m\in\mathbb{R}$ and $q\in\mathbb{N}$, we write $f=O_q(r^{-m})$ if $\pr^\a f=O(r^{-m-|\a|})$ for all $|\a|\leq q$.}
    \begin{align*}
    g&=\left\{
    \begin{aligned}
    \de+O_{q+1}\left(\frac{\ep_0}{\xja^\frac{s-1}{2}}\right),\qquad k=O_q\left(\frac{\ep_0}{\xja^\frac{s+1}{2}}\right)\qquad&\mbox{ if }\; s\in[1,3),\\
    g_m+O_{q+1}\left(\frac{\ep_0}{\xja^\frac{s-1}{2}}\right),\qquad k=O_q\left(\frac{\ep_0}{\xja^\frac{s+1}{2}}\right)\qquad&\mbox{ if }\; s\geq 3.
    \end{aligned}\right.
    \end{align*}
where we denote $\xja:=\sqrt{1+|x|^2}$ and $g_m$ denotes a $3$--dimensional Schwarzschild metric with mass $m>0$ sufficiently small.
    \item The following $b$--Sobolev bounds hold:\footnote{See Definition \ref{bSobolev} for a precise definition of the weighted $b$--Sobolev spaces $H_b^{k,\de}$.}
    \begin{align}
    \begin{split}\label{Hbflux}
    \|(g-\de,k)\|_{H_b^{q+1,\frac{s-4}{2}}(\Si_0)\times H_b^{q,\frac{s-2}{2}}(\Si_0)}&\leq\ep_0 \qquad\mbox{ if }\; s\in[1,3), \\
    \|(g-g_m,k)\|_{H_b^{q+1,\frac{s-4}{2}}(\Si_0)\times H_b^{q,\frac{s-2}{2}}(\Si_0)}&\leq\ep_0\qquad\mbox{ if }\; s\geq 3.
    \end{split}
    \end{align}
\end{enumerate}
\end{df}
In \cite{Shen23}, the first named author established the global nonlinear stability of Minkowski space for $(s,q)$--asymptotically flat vacuum initial data with $s>1$ and $q\ge 2$, while \cite{Shen24} proved the exterior stability of Minkowski space under the borderline decay assumption $s=1$, in particular for $(1,3)$--asymptotically flat initial data. Fang-Szeftel-Touati \cite{FST} constructed $(s,q)$--asymptotically flat vacuum initial data for $s>1$ and $q\in\NNN$ sufficiently large by the conformal method.

An open problem remaining concerns the existence of initial asymptotically flat vacuum data with borderlines $(1,q)$ for large values of $q$, since the conformal construction in \cite{FST} does not extend to the case $s=1$. The main result of the present paper, Theorem \ref{s=1initial}, resolves this issue by constructing, for any $s\in[1,3)$ and any $q\in\NNN$, a nontrivial $(s,q)$--asymptotically flat vacuum initial data set. Moreover, in the borderline case $s=1$, the initial data constructed are not $(s',q)$–asymptotically flat in the sense of Definition 1.1 for all $s'>1$ and $q\in\NNN$. In particular, this produces initial data that satisfy the hypotheses of \cite{Shen24} but not those of \cite{Shen23}.

The proof of our main result, Theorem \ref{s=1initial}, is modeled on the conic localization program for vacuum constraint equations, in which nontrivial solutions are constructed with support in a prescribed cone, with the goal of retaining a sharp asymptotic decay. This direction was initiated by Carlotto-Schoen \cite{CS}, who achieved conic localization at the expense of a decay loss in the transition region. Carlotto later conjectured that such localization should be possible without loss of decay \cite[Open Problem~3.18]{Car21}. This conjecture was resolved by Aretakis-Czimek-Rodnianski \cite{ACR} by characteristic gluing.

Mao-Tao \cite{MaoTao} later returned to the conic setting and developed a spacelike approach based on explicit solution operators for linearized constraint equations with prescribed conic support; see also \cite{MOT} for solution operators with annulus support. In the present work, we adapt the Mao-Tao framework to construct borderline $(1,q)$--asymptotically flat initial data, thus proving Theorem \ref{s=1initial}. It is also worthwhile to mention the recent elliptic-transport construction of spacelike initial data adapted to a spherical foliation due to Chen-Klainerman \cite{ChenKlainerman}.
\subsection{Stability of Minkowski spacetime}
\subsubsection{Previous works on the stability of Minkowski spacetime}
In 1993, Christodoulou-Klainerman \cite{Ch-Kl} established the global nonlinear stability of Minkowski spacetime for the Einstein vacuum equations. In 2003, Klainerman-Nicol\`o \cite{Kl-Ni} proved the stability of Minkowski space in the exterior of an outgoing null cone by employing a double null foliation framework, under the same assumptions on the initial data as in \cite{Ch-Kl}. Moreover, under stronger assumptions on asymptotic decay and regularity than those imposed in \cite{Ch-Kl, Kl-Ni}, Klainerman-Nicol\`o \cite{knpeeling} showed that asymptotically flat initial data give rise to solutions of the Einstein vacuum equations exhibiting strong peeling properties.

In 2007, Bieri \cite{Bieri} provided an alternative proof of the global stability of Minkowski space, which required fewer derivatives and a reduced collection of vector fields compared to the approach in \cite{Ch-Kl}. Subsequently, Lindblad-Rodnianski \cite{lr1, lr2} introduced a proof based on wave coordinates, showing that in this gauge the Einstein equations satisfy the so-called weak null structure. Using generalized wave coordinates, Huneau \cite{huneau} proved the nonlinear stability of Minkowski spacetime in the presence of a translation Killing field. Using Melrose's $b$--analysis, Hintz-Vasy \cite{hv} obtained another proof of the stability of Minkowski space. In a different direction, Graf \cite{graf} established the global nonlinear stability of Minkowski space in the context of the spacelike-characteristic Cauchy problem for Einstein vacuum equations, which, when combined with \cite{Kl-Ni}, allows one to recover the result of \cite{Ch-Kl}.

Within the framework developed in \cite{Kl-Ni} and using the $r^p$--weighted energy estimates introduced by Dafermos-Rodnianski \cite{Da-Ro}, the first named author \cite{Shen22} reproved the stability of Minkowski space in exterior regions. More recently, Hintz \cite{Hintz} established exterior stability results using the analytic framework of \cite{hv}. Moreover, using $r^p$--weighted estimates, the first named author \cite{Shen23} extended the result of Bieri \cite{Bieri} to initial data satisfying minimal decay assumptions. Subsequently, in \cite{Shen24}, the first named author showed that the exterior stability of Minkowski space persists under decay assumptions that are borderline relative to those considered in \cite{Shen23}.

In addition to the vacuum case, there is extensive work on the stability of Einstein equations coupled to matter fields; see the introductions of \cite{Shen22, Shen23} and the references therein.
\subsubsection{Main results of \texorpdfstring{\cite{Shen23,Shen24}}{}}
We now recall the main results in \cite{Shen23,Shen24}.
\begin{thm}[Global stability of Minkowski with minimal decay \cite{Shen23}]\label{thm:Shen23}
Fix $s>1$ and $0<\ep_0\ll 1$. Let $(\Si_0,g,k)$ be a $(s,2)$--asymptotically flat \emph{vacuum} initial data with size $\ep_0$, in the sense of Definition \ref{defasym}. Then, $(\Si_0,g,k)$ has a unique, globally hyperbolic, smooth, geodesically complete development under Einstein vacuum equations. Moreover, this development is globally asymptotically flat, i.e. the Riemann curvature tensor tends to zero along any future directed causal or spacelike geodesic.
\end{thm}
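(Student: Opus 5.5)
This result is quoted from \cite{Shen23}, and I would follow the strategy of that paper, which extends Bieri \cite{Bieri} and Klainerman--Nicol\`o \cite{Kl-Ni}. The plan is a continuity (bootstrap) argument built on a foliation of the development. The foliation has an \emph{exterior} part, covered by a double null foliation $(u,\ub)$ emanating from the initial slice outside a large compact set. It has an \emph{interior} part, covered by a time foliation (maximal or $t$--constant slices) terminating on a last outgoing cone $\Cb_*$. The two parts are glued along a timelike interface. The main tools are the null structure equations for the Ricci coefficients and the Bianchi equations for the null components $\a,\b,\rho,\si,\underline{\b},\underline{\a}$ of the Weyl tensor. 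Because the decay assumed in Definition \ref{defasym} is only $g-\de=O(\ep_0 r^{-(s-1)/2})$, $k=O(\ep_0 r^{-(s+1)/2})$ with $s>1$ arbitrarily close to $1$, the Bel--Robinson/Morawetz vector field method of \cite{Ch-Kl} is not available. I would replace it by the $r^p$--weighted estimates of Dafermos--Rodnianski \cite{Da-Ro}, applied directly to the Bianchi pairs with $p$ tied to $s$.

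The key steps, in order, are as follows.
\begin{enumerate}
\item \emph{Initial layer.} Translate the $b$--Sobolev bounds \eqref{Hbflux} with $q=2$ into initial $r^p$--weighted fluxes on $\Si_0$ for the curvature components and their first two angular and $\nab_4$ derivatives. For example, this should give $\int_{\Si_0} r^{s}|\a|^2\les\ep_0^2$, with corresponding weights $r^{s}$ for $\b$, $r^{s}$ for $(\rho,\si)$, and so on, together with initial bounds for the Ricci coefficients.
\item \emph{Exterior curvature estimates.} For each Bianchi pair $(\a,\b),(\b,(\rho,\si)),((\rho,\si),\underline{\b}),(\underline{\b},\underline{\a})$, multiply by $r^p$ and integrate by parts over the region bounded by $C_u$, $\Cb_{\ub}$ and $\Si_0$. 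This yields outgoing and incoming flux bounds of the form $\int_{C_u} r^{s}|\a|^2+\int_{\Cb_{\ub}}r^s|\b|^2\les\ep_0^2+\text{(nonlinear)}$.
\item \emph{Exterior Ricci coefficients.} Recover the Ricci coefficients by transport along $e_3,e_4$ combined with Hodge elliptic estimates on the $2$--spheres, with decay rates dictated by $s$. Since $s<3$, no mass term is subtracted.
\item \emph{Interior region.} Treat the interior by the same $r^p$ method along the time foliation (decay in $t$ is now encoded through $\langle u\rangle$), using elliptic estimates on the slices.
\item \emph{Closing.} Show that all error integrals are bounded by $\ep_0^3$ times the bootstrap norms. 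This improves the bootstrap and, by local existence, extends the development indefinitely; geodesic completeness and decay of curvature then follow from the established pointwise bounds via Sobolev on spheres.
\end{enumerate}

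The main obstacle is closing the nonlinear error terms in step 5 with weights arbitrarily close to the borderline $s=1$ and with only $q=2$ derivatives. Products such as $\Ga\c R$, where $\Ga$ is a Ricci coefficient and $R$ a curvature component, must be shown to be integrable in both $u$ and $\ub$. With weak decay, terms like $\underline{\om}\,\a$ or $\tr\underline{\chi}\,\widehat{\chi}\c\b$ are only marginally integrable. I would first try to exploit the null structure: check that in every pair the bad decay in $u$ is always multiplied by good decay in $r$. I would also choose $p$ slightly less than $s$ (namely $p=s-\de$) so that the weighted Hardy inequalities produce a gain of $\langle u\rangle^{-\de}$, enough to make the $u$--integrals converge. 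The limited regularity ($q=2$) would be handled as in \cite{Bieri}: commute only with angular derivatives and $\nab_4$, $r\nab_4$, and obtain the missing derivatives elliptically, so that the top-order terms appear only quadratically in energy fluxes.
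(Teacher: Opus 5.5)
This theorem is only quoted from \cite{Shen23}, and your plan uses the same mechanism the paper sketches for it: a bootstrap closed by Dafermos--Rodnianski $r^p$--weighted estimates for the Bianchi equations, where the cubic error $r^s|\R\c\Ga\c\R|\les\ep^3r^{-\frac{s+7}{2}}$ is integrable over the four-dimensional region $V_t$ exactly because $s>1$. Two small corrections: the estimate already closes at $p=s$, so your shift to $p=s-\de$ is unnecessary (lowering $p$ below the critical weight is the borderline $s=1$ device of \cite{Shen24}, where the lost $r^{\de}$ has to be recovered by incoming transport, at the cost of requiring $q=3$); and with $q=2$ the data only give $\R\in H_b^{1,\frac s2}(\Si_0)$, i.e.\ one derivative of curvature as in \cite{Bieri}, not the two your first step suggests.
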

\begin{thm}[Exterior stability of Minkowski with borderline decay \cite{Shen24}]\label{thm:Shen24}
Let $(\Si_0,g,k)$ be a $(1,3)$--asymptotically flat \emph{vacuum} initial data with size $\ep_0$, in the sense of Definition \ref{defasym}. Let $K\subset\Si_0$ be a compact set such that $\Si_0\setminus K$ is diffeomorphic to $\RRR^3\setminus\ov{B}_1$. Then, the stability of Minkowski holds in the exterior of an outgoing null cone. More precisely, there exists a unique future development $(\M,\g)$ of $\Si_0\setminus K$ with the following properties:
\begin{itemize}
\item $(\M,\bg)$ can be foliated by a double null foliation $(C_u,\Cb_\ub)$ whose outgoing leaves $C_u$ are complete.
\item We have detailed control of all the quantities associated with the double null foliations of the spacetime.
\end{itemize}
\end{thm}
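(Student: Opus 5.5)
The statement is quoted from \cite{Shen24}, so the plan below follows the double null framework of Klainerman-Nicol\`o \cite{Kl-Ni}, adapted to the $r^p$--weighted energy method of Dafermos-Rodnianski \cite{Da-Ro} as in \cite{Shen22}. The first step is to build the exterior region. Outside $K$ we use the coordinates of Definition \ref{defasym} with $s=1$. We construct the local development of $\Si_0\setminus K$ and foliate it by a double null foliation $(C_u,\Cb_\ub)$ whose initial outgoing cone $C_{u_0}$ emanates from $\pa K$. The norms are formulated in terms of the null curvature components $\a,\b,\rho,\si,\underline{\b},\underline{\a}$ and the Ricci coefficients. Since $s=1$ gives $g-\de=O_{4}(\ep_0)$ and $k=O_3(\ep_0\xja^{-1})$, we only expect curvature to decay like $r^{-2}$ on $\Si_0$. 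We therefore replace the classical Bel-Robinson weighted norms by $r^p$--weighted fluxes: each Bianchi pair carries the maximal weight allowed by this decay. The $H_b$ bounds \eqref{Hbflux} with $q=3$ are exactly what is needed to control these fluxes, up to three angular/$\nabla_4$ derivatives, on the initial hypersurface.

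The second step is a bootstrap. On a region $\{u_0\le u\le u_*,\ \ub\le \ub_*\}$ we assume:
\begin{itemize}
\item $r^p$--weighted bounds of size $\ep$ for the curvature fluxes along $C_u$ and $\Cb_\ub$, with $\ep_0\ll\ep\ll1$;
\item pointwise decay bounds for the Ricci coefficients consistent with $s=1$.
\end{itemize}
We then improve these assumptions to $\ep_0$-size.
\begin{enumerate}
\item \textbf{Curvature.} We apply the $r^p$--method to the Bianchi pairs $(\a,\b)$, $(\b,(\rho,\si))$, $((\rho,\si),\underline{\b})$, $(\underline{\b},\underline{\a})$. Multiplying by $r^p$ and integrating by parts produces boundary fluxes and a bulk term with a good sign for the admissible $p$. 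The nonlinear error terms are treated using the null structure together with the bootstrap bounds.
\item \textbf{Commutation.} Commuting with $r\nabla$, $\nabla_4$ and $r\nabla_3$ up to three times (this is where $q=3$ enters) and applying Sobolev on the spheres $S_{u,\ub}$ yields pointwise curvature decay.
\item \textbf{Ricci coefficients.} These are estimated by integrating the null structure transport equations along $e_4$ from $\Si_0$ and along $e_3$ from the last slice. Elliptic Hodge systems on $S_{u,\ub}$ recover their top derivatives.
\item \textbf{Foliation.} The last cone $\Cb_{\ub_*}$ is chosen with a canonical foliation to avoid loss of derivatives, as in \cite{Kl-Ni}.
\end{enumerate}
A continuity argument then gives completeness of the outgoing leaves $C_u$ and the detailed control of all double null quantities.

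The main obstacle is the borderline case $s=1$ itself. Here the weight $p$ in the $(\a,\b)$ and $(\b,\rho)$ estimates sits at the endpoint where the bulk term only controls a logarithmically divergent quantity. In addition, the nonlinear terms such as $\hat\chi\cdot\a$ and $\underline{\hat\chi}\cdot\b$ fail to be integrable in $r$ by exactly a logarithm. To handle this, I would first try to split each curvature component into the part determined linearly by the initial data and a nonlinear remainder; the remainder should enjoy strictly better decay because its sources are quadratic. The borderline $r^p$ estimate is then applied only to the linear part, where one accepts a flux bound with no room to spare, and the $\epsilon$-gain from quadratic smallness is used to close the remainder with $p$ slightly below the endpoint. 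If this decomposition does not close, the fallback is to work with renormalized quantities adapted to the structure equations, such as $\check\rho=\rho-\overline{\rho}$ or combinations with $\hat\chi\cdot\underline{\hat\chi}$, to cancel the worst terms. Ensuring that the Ricci coefficient transport equations tolerate this borderline curvature decay, without losing the $r$-weights needed for the completeness of $C_u$, is the delicate point.
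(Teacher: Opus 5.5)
The paper does not reprove this theorem. It only recalls from \cite{Shen24} the mechanism that handles $s=1$, and your proposal departs from that mechanism at the decisive step. Your diagnosis of the obstacle is right: at $p=1$ the error $\int_{V_t} r\,|\R\c\Ga\c\R|$ behaves like $\int_{V_t}\ep^3 r^{-4}$ and diverges logarithmically.

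\textbf{The remedy does not close.} It rests on the claim that the nonlinear remainder ``should enjoy strictly better decay because its sources are quadratic'', and this is false at $s=1$. With $\Ga\sim r^{-1}$ and $\R\sim r^{-2}$, the source $\Ga\c\R\sim r^{-3}$ has exactly the homogeneity of the linear term $\nab\R$. The paper notes the same phenomenon for the constraint equations in Remark \ref{rk:better-h1pi1'}. Quadratic smallness gains a factor of $\ep$, not a power of $r$, whereas the divergence is a decay problem, not a size problem.

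Concretely, closing the remainder at $p=1-2\de$ gives only $\R_{\mathrm{nl}}=O(\ep^2 r^{-2+\de})$. So the full curvature decays only like $r^{-2+\de}$, and, through the transport equations, the Ricci coefficients only like $r^{-1+\de}$. Inserting these rates into the remainder's own error term gives $\int r^{1-2\de}\,r^{-2+\de}\,r^{-1+\de}\,r^{-2+\de}=\int r^{-4+\de}$ over the spacetime region. This diverges again, so the bootstrap is not improved.

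There is also a more basic problem. In the geometric double null setting there is no fixed background on which to define ``the part determined linearly by the data''. At $s=1$ the metric perturbation is merely bounded, so comparing with Minkowski costs decay.

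\textbf{The missing ingredient.} In \cite{Shen24}, the argument runs as follows:
\begin{itemize}
\item Run the $r^p$ estimates for the full curvature at $p=1-2\de$. Under the sharp bootstrap assumptions $\Ga=O(\ep r^{-1})$ and $\R=O(\ep r^{-2})$, the error is $\int r^{-4-2\de}$, which is finite.
\item Accept that the resulting fluxes only give $\Ga=O(\ep_0 r^{-1+\de})$ and $\R=O(\ep_0 r^{-2+\de})$.
\item Recover the lost factor $r^{\de}$ by exploiting the transport equations in the incoming null direction. This step is indispensable for improving the sharp bootstrap assumptions.
\end{itemize}
The recovery step costs one derivative. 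That is the actual reason $(1,3)$ data are required in \cite{Shen24}, while $(s,2)$ data suffice for $s>1$ in \cite{Shen23}. Attributing $q=3$ to ``commuting up to three times'' misplaces where the extra derivative is spent.

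Your fallback via renormalized quantities such as $\check\rho$ is not what closes the argument either. It can remove particular bad pairings, but it does not touch the $r^{\de}$ loss that any flux with $p<1$ produces through Sobolev.
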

We briefly explain the main ideas of the proofs in \cite{Shen23, Shen24}, in order to clarify the motivation for introducing the notion of $(s,q)$--asymptotically flat initial data.

Let $(\M,\g)$ denote the future development of $(s,q)$--asymptotically flat initial data. For $s\in[1,3)$, on spheres of area radius $r$, the following bootstrap bounds represent the optimal decay rates that one can expect:
\begin{align*}
\g-\etabf = O\!\left(\frac{\ep}{r^{\frac{s-1}{2}}}\right),\qquad
\Ga = O\!\left(\frac{\ep}{r^{\frac{s+1}{2}}}\right),\qquad
\R = O\!\left(\frac{\ep}{r^{\frac{s+3}{2}}}\right),
\end{align*}
where $\etabf$ denotes the Minkowski metric, $\Ga$ the connection coefficients, $\R$ the Weyl curvature components, and $\ep:=\ep_0^{\frac{2}{3}}$ denotes the size of the bootstrap. Moreover, from \eqref{Hbflux} we obtain the weighted Sobolev bound
\begin{align*}
\|\R\|_{H_b^{q-1,\frac{s}{2}}(\Si_0)}\les\ep_0.
\end{align*}
The Weyl curvature satisfies the schematic Bianchi equations
\begin{align}
\nab_t \R = \nab \R + \Ga \c \R, \label{Bianchi1}
\end{align}
where $\nab_t$ denotes the time derivative, while $\nab$ denotes derivatives tangent to the spheres. Applying the $r^p$--weighted estimates of Dafermos-Rodnianski \cite{Da-Ro}, one obtains schematically
\begin{align}\label{rpweighted}
\int_{\Si_t} r^p |\R|^2
\lesssim
\int_{\Si_0} r^p |\R|^2
+ \int_{V_t} r^p |\R \c \Ga \c \R|,
\end{align}
where $t$ is a time function and $V_t$ denotes the spacetime region between $\Si_0$ and $\Si_t$. We refer to {\cite[Section 5] {Shen23}} for further details on this argument.

When $s\in(1,3)$, taking $p=s$ in \eqref{rpweighted}, the nonlinear term can be estimated as
\begin{align*}
\int_{V_t} r^s |\R \c \Ga \c \R|\les\int_{V_t}r^s\frac{\ep}{r^{\frac{s+3}{2}}}\frac{\ep}{r^{\frac{s+1}{2}}}\frac{\ep}{r^{\frac{s+3}{2}}}\les\int_{V_t} \frac{\ep^3}{r^{\frac{s+7}{2}}}\les\ep_0^2,
\end{align*}
which is integrable in spacetime. Consequently, the nonlinear terms decay faster than the linear ones, and the bootstrap argument can be closed at the level of the $r^s$--weighted energy flux.

In contrast, in the borderline case $s=1$, the same estimate yields
\begin{align*}
\int_{V_t} \frac{\ep^3}{r^4} = \infty,
\end{align*}
since $V_t$ is a four-dimensional spacetime region. To recover integrability, one must therefore take $p=1-2\de$ with $0<\de\ll1$ in \eqref{rpweighted}. This leads to energy estimates that yield only weaker decay rates:
\begin{align*}
\Ga = O\!\left(\frac{\ep_0}{r^{1-\de}}\right),\qquad
\R = O\!\left(\frac{\ep_0}{r^{2-\de}}\right).
\end{align*}
To recover the lost factor $r^\de$ and obtain the optimal decay, one must additionally exploit the transport equations in the incoming null direction. This mechanism explains why $(s,2)$--asymptotically flat initial data are considered in \cite{Shen23} for $s>1$, whereas $(1,3)$--asymptotically flat initial data are required in \cite{Shen24}. We refer to {\cite[Section 6] {Shen24}}  for a detailed discussion of the recovery of the $r^\de$--loss via transport equations.
\subsection{Statements of the main theorem}

A comparison between Theorems \ref{thm:Shen23} and \ref{thm:Shen24} highlights a natural gap between the corresponding assumptions on the initial data. More precisely, one may ask whether there exists an initial data set $(\Si_0,g,k)$ which is $(1,3)$--asymptotically flat with size $\ep_0$, but fails to be $(s,2)$--asymptotically flat for any $s>1$. Such initial data would satisfy the assumptions of Theorem \ref{thm:Shen24} while violating those of Theorem \ref{thm:Shen23}, and would therefore give rise to a spacetime for which exterior stability holds in the future, whereas the question of global stability remains open.

To formulate our main result, we first introduce the notion of conical support.
\begin{df}\label{conedf}
For $\om\in\SSS^2$ and $\th\in(0,\frac{\pi}{2})$, we define the cone
\[
\Om_{\om,\th}
:=
\bigl\{x\in\RRR^3 \,:\, \angle(x,\om)\le \th \bigr\},
\]
that is, the cone in $\RRR^3$ with vertex at the origin, axis $\om$, and opening angle $\th$.
\end{df}
We can now state the main theorem, which constructs $(s,q)$--asymptotically flat initial data for any $s\in[1,3)$ and $q\in\mathbb{N}$ in the sense of Definition \ref{defasym}.
\begin{thm}\label{s=1initial}
Let $s\in[1,3)$, $q\in\mathbb{N}$, and let $\ep_0>0$ be sufficiently small. Then there exists a nontrivial $(s,q)$--asymptotically flat initial data set $(\RRR^3,g,k)$ with size $\ep_0$. More precisely, $(g,k)$ solves \eqref{MT1.1} and satisfies the following properties:
\begin{enumerate}
    \item The following pointwise decay bounds hold:
    \begin{align}\label{decay}
        g-\de
        =O_{q+1}\!\left(\frac{\ep_0}{\xja^{\frac{s-1}{2}}}\right),
        \qquad
        k
        =O_q\!\left(\frac{\ep_0}{\xja^{\frac{s+1}{2}}}\right).
    \end{align}
    \item The perturbation $(g-\de,k)$ is supported in a cone $\Om:=\Om_{\om,\th}$ with $\th\in(0,\frac{\pi}{2})$, and satisfies the bound
    \begin{equation}
        \|(g-\de,k)\|_{H_b^{q+1,\frac{s-4}{2}}(\Om)\times H_b^{q,\frac{s-2}{2}}(\Om)}
        \les\ep_0.
    \end{equation}
    \item For any $s'>s$, we have
    \begin{align}\label{noimprovements}
        (g-\de,k)\notin H_b^{q+1,\frac{s'-4}{2}}(\Om)\times H_b^{q,\frac{s'-2}{2}}(\Om).
    \end{align}
\end{enumerate}
\end{thm}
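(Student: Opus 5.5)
We construct the data as a small perturbation $(g,k)=(\de+h,\,\pi)$, where $(h,\pi)$ is obtained from the conical solution operators of Mao--Tao \cite{MaoTao} for the linearized constraint operator at $(\de,0)$. The linearized operator splits into the Hamiltonian part $h\mapsto \pr_i\pr_j h_{ij}-\De\,\tr h$ and the momentum part $\pi\mapsto \pr_i\pi_{ij}-\pr_j\tr\pi$. Mao--Tao supply operators $S_1,S_2$, built from Bogovskii-type averaging kernels along rays in the cone, with the following properties. For $f,X$ supported in a smaller cone $\Om'\Subset\Om$, they produce $h=S_1 f$ and $\pi=S_2X$ supported in $\Om$ that solve the linear equations exactly. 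They are bounded on weighted $b$--Sobolev spaces $H_b^{k,\de}$ with gains of one or two powers of $\xja$ and of derivatives, and they are also bounded on pointwise weighted $C^k$ spaces with the same weights, for the whole admissible range of weights. We may assume these mapping properties, possibly after checking that the range $\de\in\left[-\frac{3}{2},-\frac12\right)$ corresponding to $s\in[1,3)$ is admissible. The main step is then the nonlinear problem. We make the ansatz
\[
(h,\pi)=\ep_0(h_0,\pi_0)+\bigl(S_1 f,\,S_2X\bigr),
\]
where $(h_0,\pi_0)$ is an explicit seed supported in $\Om'$. We then solve for the corrector $(f,X)$, localized in $\Om$, so that the full constraint map vanishes. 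This is a fixed point problem
\[
(f,X)=-\Bigl(L(\ep_0 h_0,\ep_0\pi_0)+Q\bigl(\ep_0h_0+S_1f,\ \ep_0\pi_0+S_2X\bigr)\Bigr),
\]
with $L$ the linearization and $Q$ the quadratic and higher remainder. For this to be consistent, the right-hand side must stay in the class on which $S_i$ act, that is, it must be supported in the cone. It is, since everything is supported in $\Om$, and the Mao--Tao operators accept data supported in $\Om$ after shrinking cones along a finite sequence of nested cones, or using their version with support in the same cone.

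The seed is chosen to saturate the decay exactly. We take $\chi$ to be a smooth cutoff to $\Om'$, homogeneous of degree $0$ for $|x|\geq1$, and set
\[
h_0=\chi(x)\,\xja^{-\frac{s-1}{2}}\,e,\qquad \pi_0=\chi(x)\,\xja^{-\frac{s+1}{2}}\,e',
\]
with $e,e'$ fixed constant symmetric tensors. Then \eqref{decay} holds for the seed. The seed sits in $H_b^{q+1,\frac{s-4}{2}}\times H_b^{q,\frac{s-2}{2}}$ but only marginally: $\int_{\Om'}\xja^{2\cdot\frac{4-s}{2}-3}\,\xja^{-(s-1)}\,dx$ behaves like $\int r^{2-2s}r^{2}\,dr$. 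If this logarithmically diverges, the weights in Definition \ref{bSobolev} are presumably set so that the seed is borderline-in. If necessary we replace the power by $\xja^{-\frac{s-1}{2}}(\log\xja)^{-1}$, or by a sum of dyadic bumps with $\ell^2$ but not $\ell^1$ summable coefficients. This keeps the data in the space of item 2 but outside the spaces with $s'>s$ in \eqref{noimprovements}, while keeping the pointwise bound.

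For the contraction we work in
\[
X_\ep=\Bigl\{(f,X):\ \|(f,X)\|_{H_b^{q-1,\frac{s}{2}}\times H_b^{q-1,\frac{s}{2}}}\le C\ep_0^2,\ \text{with the corresponding pointwise weighted bounds}\Bigr\},
\]
choosing the seed in $\ker L$ up to an $O(\ep_0)$ error absorbed into $f,X$. Quadratic terms like $h\,\pr^2h$ and $\pi\pi$ decay like $\ep_0^2\xja^{-s-1}$ pointwise, which is one extra power $\xja^{-\frac{s-1}{2}}$ better than linear terms only when $s>1$. In the borderline case $s=1$ this gain is $O(1)$. We therefore exploit instead the $\ep_0$-smallness and the product estimate combining pointwise and $L^2$ bounds, which is the Moser-type estimate $\|uv\|_{H_b^{k,a+b}}\les\|u\|_{C_b^{k,a}}\|v\|_{H_b^{k,b}}$.

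I expect the main obstacle to be this step: closing the nonlinear estimates, and showing that the correction terms $S_1f$, $S_2X$ do not cancel the seed's borderline tail. The contraction closes by smallness. Nontriviality and \eqref{noimprovements} then follow because the corrections are $O(\ep_0^2)$ in the target norms while the seed fails the $s'$-norm by a divergent amount. To make that comparison rigorous, we check that the corrections lie in a strictly better space. This uses the extra decay of the quadratic source, which holds for $s>1$ and, at $s=1$, follows from the logarithmic or dyadic seed modification.
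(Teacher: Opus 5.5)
There is a genuine gap at the seed. Your $h_0=\chi\,\xja^{-\frac{s-1}{2}}(\log\xja)^{-1}e$, $\pi_0=\chi\,\xja^{-\frac{s+1}{2}}(\log\xja)^{-1}e'$, with constant tensors $e,e'$, is not in the kernel of the linearized constraint operator $L$. For instance, the angular cutoff generically contributes terms of size $\xja^{-\frac{s+3}{2}}(\log\xja)^{-1}$, which is exactly the critical rate of $\pr^2h_0$.

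This has three consequences. The forcing $L(\ep_0h_0,\ep_0\pi_0)$ in your fixed-point equation is $O(\ep_0)$, so the ball $\|(f,X)\|\le C\ep_0^2$ is not invariant. The linear part of your solution is $\ep_0(I-SL)(h_0,\pi_0)$, which is the projection of the seed onto $\ker L$ along the range of $S$. Nothing in your argument stops this projection from losing the borderline tail; indeed it vanishes identically if the seed lies in the range of $S$. The phrase ``choosing the seed in $\ker L$ up to an $O(\ep_0)$ error'' has no construction behind it.

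Smallness in the $s$-norm cannot rescue this either. The tail of the seed on $\{|x|\ge R\}$ is itself small in $H_b^{q+1,\frac{s-4}{2}}$ for $R=R(\ep_0)$ large. So a correction of size $O(\ep_0^2)$ in the $s$-norm can still cancel it entirely.

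The missing idea is to generate exact kernel elements from scalar potentials. In the trace-reversed variables $h_{ij}=g_{ij}-\de_{ij}-\de_{ij}\tr_\de(g-\de)$, $\pi_{ij}=k_{ij}-\de_{ij}\tr_\de k$, the linear operator is $(\pr_i\pr_jh^{ij},\pr_i\pi^{ij})$. The tensor $\pr^i\pr^j\phi-\de^{ij}\De\phi$ is divergence-free for every $\phi$. In your variables this is simply $g-\de=\nabla^2\phi_h$, $k=\nabla^2\phi_\pi$.

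The paper takes $\phi_h=\ep_0r^{\frac{5-s}{2}}/L(r)$ and $\phi_\pi=\ep_0r^{\frac{3-s}{2}}/L(r)$, cut off to the cone, with e.g.\ $L(r)=\log(2+r)$. It gets the lower bound needed for sharpness from the radial eigenvalue $-2\phi'(r)/r$ on a subcone where the cutoff equals $1$. As a side note, your weight count should read $\int^\infty r^{-1}\,dr$ for every $s$. So the logarithmic factor is mandatory, and your fallback $(\log\xja)^{-1}$ is the right fix; it matches the role of $L$.

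Second, even with a kernel seed, your plan to prove \eqref{noimprovements} by placing the correction in a strictly better space fails at $s=1$. The quadratic source behaves like $\ep_0^2r^{-2}L(r)^{-2}$, which generically lies in no $H_b^{q-1,\frac{s'}{2}}$ with $s'>1$. You would therefore need mapping properties of $S$ on logarithmically weighted spaces, which you have not established.

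The paper argues by contradiction instead. Suppose $(h,\pi)$ lies in the $s'$-space. Then $\Phi(h,\pi)\in H_b^{q-1,\frac{s'}{2}}$ by Proposition \ref{MT9}, since $\frac{s'-4}{2}\ge-\frac32$. Hence $(h_1,\pi_1)=S\Phi(h,\pi)$ lies in the $s'$-space by Proposition \ref{prop:S}. This forces $(h_0,\pi_0)=(h,\pi)-(h_1,\pi_1)$ into that space, a contradiction. The argument uses only the hypothesized regularity, not any extra decay of the source.

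Apart from these two points, your contraction step agrees with the paper, which also closes it by $\ep_0$-smallness and the algebra property of $H_b^{k,-\frac32}$.
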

\begin{rk}
Taking $s=1$ and $q\ge 3$ in Theorem~\ref{s=1initial}, we obtain the corresponding borderline initial data
\[
(g,k)\in H_b^{q+1,-\frac{3}{2}}(\RRR^3)\times H_b^{q,-\frac{1}{2}}(\RRR^3).
\]
As an immediate consequence of the exterior stability result established in \cite{Shen24}, this initial data admits a future development in the exterior of an outgoing null cone. However, in view of \eqref{noimprovements}, the global stability result of \cite{Shen23} does not apply to this class of data. Consequently, the problem of global stability of Minkowski spacetime for the initial data $(g,k)$ constructed in Theorem \ref{s=1initial} remains open.
\end{rk}
\subsection{Outline of the proof}\label{subsec:outline-s1-vs-sp}
We sketch the main ideas of the proof of Theorem \ref{s=1initial}.
\begin{enumerate}
\item \textbf{Set-up.}
Let $\om\in\SSS^2$ and $\th\in(0,\frac{\pi}{2})$, and fix a cone $\Om=\Om_{\om,\th}\subset\RRR^3$, and denote $r:=|x|$.
\item \textbf{Cone-supported linear seed.} The role of the linear seed is to saturate the critical $s$--weight while remaining exactly in the kernel of the linearized constraint operator. Let $L$ be an admissible borderline decay weight function.\footnote{e.g. $L(r)=\log(2+r)$, see Proposition \ref{proph0pi0} for more details.} Choose scalar potentials $\phi_h,\phi_\pi$ and apply a cutoff so that $\supp(\phi_h,\phi_\pi)\subset\Om$. Fix a subcone $\Om_o\subset\subset\Om$ such that the cutoff equals 1 on $\Om_o\cap\{r\ge 2\}$. On $\Om_o\cap\{r\ge 2\}$, set
\[
\phi_h(x)=\ep_0\,\frac{r^{\frac{5-s}{2}}}{L(r)},
\qquad
\phi_\pi(x)=\ep_0\,\frac{r^{\frac{3-s}{2}}}{L(r)}.
\]
Define
\begin{equation}\label{eq:outline-seed}
h_0^{ij}:=\pr^i\pr^j\phi_h-\de^{ij}\De\phi_h,
\qquad
\pi_0^{ij}:=\pr^i\pr^j\phi_\pi-\de^{ij}\De\phi_\pi.
\end{equation}
Then $\supp(h_0,\pi_0)\subset\ov\Om$, and the seed tensors satisfy the linearized constraints
\[
\pr_i\pr_j h_0^{ij}=0,
\qquad
\pr_i\pi_0^{ij}=0.
\]
Moreover, for every $\ell\ge0$ and $x\in\Om_o\cap\{r\ge2\}$,
\[
|\pr^\ell h_0(x)|\simeq_\ell \ep_0\,\frac{r^{-\frac{s-1}{2}-\ell}}{L(r)}, \qquad |\pr^\ell \pi_0(x)|\simeq_\ell \ep_0\,\frac{r^{-\frac{s+1}{2}-\ell}}{L(r)}.
\]
The choice of $L$ ensures that $(h_0,\pi_0)$ belongs to the target space with weight $s$, but not to any strictly stronger weighted space with weight $s'>s$.
\item \textbf{Nonlinear correction via a conical right inverse.}
Write $(h,\pi)=(h_0+h_1,\pi_0+\pi_1)$ and $P(h,\pi)=\Phi(h,\pi)$. Let $S$ be the solution operator constructed in Proposition \ref{prop:S}, so that
\[
PS(f,F^j)=(f,F^j),
\qquad 
\supp(f,F^j)\subset\ov\Om \ \Longrightarrow\ \supp S(f,F^j)\subset\ov\Om.
\]
We solve the fixed point problem
\begin{equation}\label{eq:outline-fixed-point}
(h_1,\pi_1)
=S\Phi(h_0+h_1,\pi_0+\pi_1)\quad \text{ in } \quad
H_b^{q+1,\frac{s-4}{2}}(\Om)\times H_b^{q,\frac{s-2}{2}}(\Om),
\end{equation}
using the smoothness and Lipschitz properties of $\Phi$ from Proposition \ref{MT9}, together with the boundedness of $S$ from Proposition \ref{prop:S}. For $\ep_0$ sufficiently small, \eqref{eq:outline-fixed-point} admits a unique solution satisfying $\|(h_1,\pi_1)\|\lesssim \ep_0^2$. Consequently, $(h,\pi)$ solves \eqref{PPhi} and satisfies $\supp(h,\pi)\subset\ov\Om$.
\item \textbf{Size of the correction.}
Since $\Phi$ is quadratic, the source term is of size $O(\ep_0^2)$ in the same weighted space. By the mapping properties of $S$, this yields improved decay for the correction terms:
\begin{align*}
|\pr^\ell h_1(x)|\les_\ell \ep_0^2 r^{1-s-\ell},\qquad
|\pr^\ell \pi_1(x)|\les_\ell \ep_0^2 r^{-s-\ell},
\qquad r\to\infty.
\end{align*}
\item \textbf{Sharpness of the weight.}
By construction, for any $s'>s$ we have
\[
(h_0,\pi_0)\in
\Bigl(H_b^{q+1,\frac{s-4}{2}}(\Om)\times H_b^{q,\frac{s-2}{2}}(\Om)\Bigr)\setminus\Bigl(H_b^{q+1,\frac{s'-4}{2}}(\Om)\times H_b^{q,\frac{s'-2}{2}}(\Om)\Bigr).
\]
The non-improvement \eqref{noimprovements} follows by a contradiction argument using the mapping properties of $\Phi$ and the solution operator $S$: if $(h,\pi)$ had the stronger weight $s'$, then $\Phi(h,\pi)$ would lie in the corresponding $H_b^{q-1,\frac{s'}2}$ space, hence $(h_1,\pi_1):=S\Phi(h,\pi)$ would also have weight $s'$, forcing $(h_0,\pi_0)=(h,\pi)-(h_1,\pi_1)$ to have weight $s'$, contradicting the construction.
\end{enumerate}
\subsection{Acknowledgments}
The authors thank Xuantao Chen, Sergiu Klainerman, J\'er\'emie Szeftel and Arthur Touati for their interest and helpful discussions. The authors also thank Sung-Jin Oh for valuable discussions related to \cite{MaoTao}. J.W. is supported by ERC-2023 AdG 101141855 BlaHSt.
\section{Construction of the solution operator}
\subsection{Linearized problem}\label{ssec2.1}
We introduce new variables $(h,\pi)$ by
\begin{align}
\begin{split}\label{dfhpi}
h_{ij}&=g_{ij}-\de_{ij}-\de_{ij}\tr_\de(g-\de),\\
\pi_{ij}&=k_{ij}-\de_{ij}\tr_\de k.
\end{split}
\end{align}
The inverse transformation is given by
\begin{align}
\begin{split}\label{hpiinverse}
g_{ij}&=\de_{ij}+h_{ij}-\frac12 \tr_\de h\,\de_{ij},\\
k_{ij}&=\pi_{ij}-\frac12 \tr_\de \pi\,\de_{ij}.
\end{split}
\end{align}
We then define the operator
\begin{align*}
P(h,\pi):=(\pr_i\pr_j h^{ij},\, \pr_i\pi^{ij}).
\end{align*}
In terms of the variables $(h,\pi)$, the constraint equations \eqref{MT1.1} can be written as
\begin{align}\label{hpieq}
P(h,\pi)=\Phi(h,\pi),
\end{align}
where $\Phi(h,\pi):=(M(h,\pi),N^j(h,\pi))$ and
\begin{align*}
M(h,\pi) &= h\c \pr^2 h+\pr h\c\pr h+\pi\c\pi,\\
N^j(h,\pi) &= h\c\pr\pi+\pr h\c\pi.
\end{align*}
Here and in what follows, the notation $u\c v$ denotes a linear combination of contractions of $u$ and $v$. We are thus led to study the linearized system
\begin{align}
\begin{split}\label{prprh}
P(h,\pi)=0.
\end{split}
\end{align}
The core of the argument in \cite{MaoTao} consists of the construction of an explicit solution operator for \eqref{prprh}, see Proposition \ref{prop:S} for its explicit formula and properties.
\subsection{Fundamental kernels for the divergence equations}
We recall that the linear operator $P$ in \eqref{prprh} consists of the symmetric double-divergence and the symmetric divergence. The main input from \cite{MaoTao} is the existence of conical fundamental kernels for these operators, constructed by a ray method and then averaged over directions.
\begin{thm}[Conical fundamental kernels {\cite[Theorem 4]{MaoTao}}]\label{MT4}
    Let $v\in\SSS^2$ and $\chi\in C^\infty(\SSS^2)$ such that $\int_{\SSS}\chi=1$. There exists $K_\chi,L_{\chi,v}\in\DD'(\RRR^3)$ such that
    \begin{align}
        \pr_i\pr_jK^{ij}_\chi&=\de_0,\label{prprhde0}\\
        \pr_iL^{ij}_{\chi,v}&=\de_0 v^j.\label{prpide0}
    \end{align}
    Moreover, they satisfy the following properties:
    \begin{itemize}
        \item $K_\chi$ and $L_{\chi,v}$ are symmetric $2$--tensors;
        \item $K_\chi$ and $L_{\chi,v}$ are supported in the convex hull of the cone
        \begin{align*}
            \ov{\left\{x\in\RRR^3,\frac{x}{|x|}\in\supp\chi\right\}};
        \end{align*}
        \item $K_\chi$ is homogeneous of degree $-1$ while $L_{\chi,v}$ is homogeneous of degree $-2$.
        \item $K_\chi$ and $L_{\chi,v}$ are smooth in $\RRR^3\setminus \{0\}$.
    \end{itemize}
\end{thm}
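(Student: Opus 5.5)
The statement to be proved is Theorem~\ref{MT4}, the existence of conical fundamental kernels, which the paper quotes from \cite{MaoTao}. I would prove it by a ray construction followed by averaging over directions. Fix a unit vector $\eta\in\SSS^2$. For the divergence equation, the distribution
\[
\ell_\eta^{ij}:=\eta^i\eta^j\int_0^\infty \de_{t\eta}\,dt,
\]
that is, the $\eta\otimes\eta$-valued measure on the ray $\{t\eta: t\ge0\}$, satisfies $\pr_i\ell_\eta^{ij}=\eta^j\de_0$. Indeed, pairing with a test function $\varphi$ gives
\[
-\eta^j\int_0^\infty \frac{d}{dt}\varphi(t\eta)\,dt=\eta^j\varphi(0).
\]
For the double divergence I would use the ray with a linear weight,
\[
k_\eta^{ij}:=\eta^i\eta^j\int_0^\infty t\,\de_{t\eta}\,dt .
\]
Then
\[
\pr_i\pr_jk_\eta^{ij}(\varphi)=\int_0^\infty t\,\frac{d^2}{dt^2}\varphi(t\eta)\,dt=\varphi(0)
\]
after integrating by parts twice. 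Here the boundary terms vanish because $t\varphi'\to0$ as $t\to0$ and $\varphi$ has compact support.

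Next I average in $\eta$. Define
\[
K_\chi:=\int_{\SSS^2}\chi(\eta)\,k_\eta\,d\eta .
\]
Linearity and $\int\chi=1$ then give \eqref{prprhde0}. Symmetry is clear, and $K_\chi$ is supported in the closed cone over $\supp\chi$, which lies inside its convex hull. In polar coordinates $x=t\eta$ with $dx=t^2\,dt\,d\eta$, the distribution $K_\chi$ is the function
\[
K_\chi(x)=\chi\!\left(\tfrac{x}{|x|}\right)\frac{x^ix^j}{|x|^2}\,|x|^{-1}.
\]
This function is smooth away from $0$ and homogeneous of degree $-1$. It is also locally integrable, so it does define a distribution and the identity above holds in $\DD'$.

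The divergence kernel is the main obstacle. Averaging $\ell_\eta$ gives $\pr_i\bigl(\int\chi\,\ell_\eta\bigr)^{ij}=\de_0\int\chi(\eta)\eta^j\,d\eta$, which has the wrong direction, since we need $\de_0v^j$. Replacing $\chi(\eta)\eta^i\eta^j$ by $\chi(\eta)\eta^iw^j(\eta)$ would fix the direction but destroy symmetry. My first attempt is therefore to look for symmetric coefficient tensors $A^{ij}(\eta)$, smooth and supported in $\supp\chi$, and to set
\[
L^{ij}:=\int_0^\infty\!\!\int_{\SSS^2}A^{ij}(\eta)\,\de_{t\eta}\,d\eta\,dt .
\]
As a function this is $A(x/|x|)\,|x|^{-2}$, which is homogeneous of degree $-2$ and locally integrable.

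I would then decompose $\pr_iL^{ij}$ into its radial and tangential parts. The tangential part of $A^{ij}$ contributes an angular divergence on $\SSS^2$ times $|x|^{-3}$. The radial part contributes a delta mass at the origin equal to $\int A^{ij}\eta_i\,d\eta$, and nothing else, because $\pr_r$ of the weight $r^{-2}$ measured against $r^2\,dr$ vanishes. So the conditions on $A$ are
\[
\sdiv_{\SSS^2}\bigl(A^{\cdot j}\bigr)_{\rm tan}+(\text{lower-order terms})=0
\qquad\text{and}\qquad
\int_{\SSS^2}A^{ij}\eta_i=v^j .
\]
This is a linear divergence-type equation on a spherical cap with compactly supported data. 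I expect its solvability to be the delicate step.

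If that route stalls, I would use Mao--Tao's alternative: take the kernel with a broken ray, or equivalently differentiate the double-divergence kernel. Since $\pr_i\pr_jK^{ij}=\de_0$, the tensor $L^{ij}:=-(v^i\pr_kK^{kj}+v^j\pr_kK^{ki})+v\cdot\pr\,(\cdots)$ should be built from $\pr K$ plus a trace correction. I would choose the combination so that $\pr_iL^{ij}=v^j\de_0$ modulo a term $\pr_i(\text{antisymmetric})$, and absorb that term by a second application of the ray construction. Every piece involved is homogeneous of degree $-2$, smooth off $0$, and supported in the convex hull of the cone. The convex hull, rather than the cone itself, enters exactly here, through the broken rays.
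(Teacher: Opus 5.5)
Your construction of $K_\chi$ is correct. It is precisely the ray-integration-and-averaging argument that the paper attributes to \cite{MaoTao}: the weighted ray measure $k_\eta$ has double divergence $\delta_0$, and averaging against $\chi$ produces the locally integrable kernel $\chi(x/|x|)\,x^ix^j|x|^{-3}$ with all the listed properties.

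The gap is the divergence kernel $L_{\chi,v}$, which you never actually produce. Your first route only reformulates the problem: find $A$ on the cap with $\partial_i\bigl(A^{ij}(x/|x|)|x|^{-2}\bigr)=0$ away from $0$ and flux $\int A^{ij}\eta_i=v^j$. You explicitly leave its solvability open. Your second route is aimed in the right direction but is not executed:
\begin{itemize}
\item the displayed combination has the wrong sign, since $\partial_i(v^j\partial_kK^{ki})=+v^j\delta_0$;
\item the correction term is left as $(\cdots)$;
\item the plan to remove a leftover $\partial_i(\text{antisymmetric})$ ``by a second application of the ray construction'' is circular, because cancelling such a remainder by a symmetric tensor is again an instance of the symmetric divergence problem you are trying to solve.
\end{itemize}

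The missing idea is that an exact formula exists with no remainder at all:
\[
L^{ij}_{\chi,v}:=v^j\partial_kK_\chi^{ik}+v^i\partial_kK_\chi^{jk}-(v\cdot\partial)K_\chi^{ij}.
\]
Then
\[
\partial_iL^{ij}_{\chi,v}=v^j\partial_i\partial_kK_\chi^{ik}+(v\cdot\partial)\partial_kK_\chi^{jk}-(v\cdot\partial)\partial_iK_\chi^{ij}=v^j\delta_0,
\]
where the last two terms cancel by the symmetry of $K_\chi$. This $L_{\chi,v}$ is symmetric, smooth away from $0$, and supported in $\supp K_\chi$. It is homogeneous of degree $-2$: the distributional and pointwise derivatives of $K_\chi$ coincide, since a nonzero distribution supported at $0$ in $\mathbb{R}^3$ cannot be homogeneous of degree $-2$. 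On a single ray, using $\partial_k k_\eta^{ik}=\eta^i\int_0^\infty\delta_{t\eta}\,dt$, the formula reads
\[
(\eta^iv^j+\eta^jv^i)\int_0^\infty\delta_{t\eta}\,dt-\eta^i\eta^j(v\cdot\partial)\int_0^\infty t\,\delta_{t\eta}\,dt .
\]
This is exactly the ``symmetrize and correct'' step mentioned in the paper's remark. In particular, no broken rays are needed and the support stays in the cone itself. Your assertion that the convex hull enters through broken rays is therefore unfounded, though harmless, since the cone is contained in its convex hull.
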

\begin{rk}
The idea in \cite{MaoTao} is to solve by integrating along rays. Along a fixed ray $\{r\om:r>0\}$, the divergence and double divergence reduce to $\frac{d}{dr}$ and $\frac{d^2}{dr^2}$, so one recovers the unknowns by directly integrating in $r$. Averaging over $\om$ against $\chi$ (and symmetrizing if needed) produces the conical kernels $K_\chi$ and $L_{\chi,v}$. This conic solution-operator viewpoint for divergence-type equations was originated in \cite{OT}. More recently, \cite{IMOT} generalizes this idea to a general framework for underdetermined systems based on recovery on curves and finite-dimensional cokernel conditions.
\end{rk}
\section{Estimates on the \texorpdfstring{$b$}{}--Sobolev spaces}
\subsection{\texorpdfstring{$b$}{}--Sobolev spaces}
We recall basic estimates for $b$--Sobolev spaces in this section and prove the solution operators are bounded on $b$--Sobolev spaces.
\begin{df}\label{bSobolev}
    For $k\in\NNN$, the $b$--Sobolev space $H^k_b$ is defined by the norm:
    \begin{align*}
        \|u\|_{H_b^k(\RRR^3)}^2:=\sum_{i=0}^k\|\xja^i\pr^iu\|_{L^2(\RRR^3)}^2.
    \end{align*}
    We extend the definition to $k\in\RRR$ by duality and interpolation. We further define for $\de\in\RRR$
    \begin{align*}
        H_b^{k,\de}:=\xja^{-\de}H_b^k.
    \end{align*}
\end{df}
\begin{rk}
The $b$--Sobolev space captures the property that the decay rate of a function improves by $\xja^{-1}$ after taking a derivative, which is consistent with Definition \ref{defasym}. However, another interesting and difficult problem is to consider the initial data in standard Sobolev spaces $H^k(\RRR^3)$ but not in $H^k_b(\RRR^3)$.
\end{rk}
\begin{prop}\label{MT3}
    We have the following properties for any $\de\in\RRR$:
    \begin{enumerate}
        \item The following inequality holds for $k>\frac{3}{2}$:
        \begin{align}\label{embedding}
            \|\xja^{\frac{3}{2}+\de}u\|_{L^\infty}\les \|u\|_{H_b^{k,\de}}.
        \end{align}
        \item The following multiplication estimate holds:
        \begin{align}\label{bilinear}
            \|uv\|_{H_b^{k,\de}}\les \|u\|_{H_b^{k_1,\de_1}}\|v\|_{H_b^{k_2,\de_2}},
        \end{align}
        for $\de_1+\de_2=\de-\frac{3}{2}$, $k_1+k_2>0$,
        $$
        k=\Bigg\{\begin{aligned}
            \min(k_1,k_2),\qquad \max(k_1,k_2)>\frac{3}{2},\\
            k_1+k_2-\frac{3}{2},\qquad \max(k_1,k_2)\leq\frac{3}{2}.
            \end{aligned}
        $$
    \end{enumerate}
\end{prop}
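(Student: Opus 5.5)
The plan is to reduce both statements to the case $\de=0$ via the weight $\xja^{\de}$, and then use the unweighted $b$--Sobolev structure.

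\textbf{Embedding \eqref{embedding}.} Set $w:=\xja^{\de}u$. Since $\pr^i\xja^{\de}=O(\xja^{\de-i})$, Leibniz gives $\|w\|_{H_b^k}\simeq\|u\|_{H_b^{k,\de}}$ for integer $k$, and by interpolation for real $k$. So it suffices to prove $\|\xja^{3/2}w\|_{L^\infty}\les\|w\|_{H_b^k}$ for $k>\frac32$.

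\begin{itemize}
\item Near the origin, on $\{|x|\le2\}$, $H_b^k$ is equivalent to $H^k$, and the standard Sobolev embedding $H^k\subset L^\infty$ applies.
\item On a dyadic annulus $A_R=\{R\le|x|\le2R\}$, rescale with $w_R(y):=w(Ry)$ for $y\in A_1$. A change of variables gives
\[
\|\pr_y^iw_R\|_{L^2(A_1)}=R^{i-\frac32}\|\pr^iw\|_{L^2(A_R)}\simeq R^{-\frac32}\|\xja^i\pr^iw\|_{L^2(A_R)}.
\]
\item Applying Sobolev embedding on the fixed annulus $A_1$ (slightly enlarged) gives $\sup_{A_R}|w|\les R^{-3/2}\|w\|_{H_b^k(\tilde A_R)}$. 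Taking the supremum over $R$ yields the claim. Fractional $k$ is handled the same way, using the interpolation definition on the rescaled pieces.
\end{itemize}

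\textbf{Multiplication \eqref{bilinear}.} Again pull the weights out. Write $u=\xja^{-\de_1}u'$ and $v=\xja^{-\de_2}v'$, so that $uv=\xja^{-\de_1-\de_2}u'v'=\xja^{-\de}\xja^{3/2}u'v'$. It therefore suffices to show
\[
\|\xja^{3/2}u'v'\|_{H_b^k}\les\|u'\|_{H_b^{k_1}}\|v'\|_{H_b^{k_2}}.
\]
Decompose with a dyadic partition of unity $\sum_R\psi_R$ subordinate to the annuli $A_R$; the $H_b^k$ norm is equivalent to $\big(\sum_R\|\psi_Rf\|_{H_b^k}^2\big)^{1/2}$. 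On each annulus, rescale as above: the $H_b$ norms of $u'$ and $v'$ become $R^{3/2}$ times the standard $H^{k_j}(A_1)$ norms, while $\|\cdot\|_{H_b^k(A_R)}$ of the product becomes $R^{3/2}\|\cdot\|_{H^k(A_1)}$. Apply the classical Sobolev product law on a bounded domain:
\begin{itemize}
\item $H^{k_1}H^{k_2}\subset H^{\min(k_1,k_2)}$ when $\max(k_1,k_2)>\frac32$;
\item $H^{k_1}H^{k_2}\subset H^{k_1+k_2-\frac32}$ when $\max(k_1,k_2)\le\frac32$, provided $k_1+k_2>0$.
\end{itemize}
Counting powers of $R$, the piece satisfies
\[
R^{3/2}\cdot R^{3/2}\|u'_Rv'_R\|_{H^k}\les R^{3/2}\|u'_R\|_{H^{k_1}}\cdot R^{3/2}\|v'_R\|_{H^{k_2}}.
\]
In original variables this reads $\|\xja^{3/2}\psi_Ru'v'\|_{H_b^k}\les\|\tilde\psi_Ru'\|_{H_b^{k_1}}\|\tilde\psi_Rv'\|_{H_b^{k_2}}$, where $\tilde\psi_R$ denotes a slightly fattened cutoff. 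Summing in $R$ via $\sum a_Rb_R\le\|a\|_{\ell^2}\|b\|_{\ell^\infty}\le\|a\|_{\ell^2}\|b\|_{\ell^2}$ gives the estimate.

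\textbf{Main obstacle.} The delicate point is negative and fractional $k$, where localization and rescaling must be justified for norms defined by duality and interpolation. The approach is to prove localization for integer $k\ge0$ directly, extend it to negative integers by duality (the dual of $H_b^{k}$ is $H_b^{-k}$ with respect to $L^2$ pairing, which commutes with dyadic localization), and then interpolate. Once localization is available, the cases of the classical product law, including the duality-based case with one negative index, transfer verbatim to the rescaled annuli.
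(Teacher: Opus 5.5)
Your argument is the standard one and matches what the paper intends. The paper only says that \eqref{embedding} is the usual Sobolev embedding and that \eqref{bilinear} reduces to the unweighted Sobolev product law, citing Mao--Tao. Your dyadic rescaling is exactly the mechanism behind that reduction; equivalently, in log-polar coordinates $t=\log r$, $H_b^k$ near infinity is $r^{-3/2}H^k(\RRR_t\times\SSS^2)$. Your power counting is right: every norm of a piece localized to $\{|x|\sim R\}$ rescales by the same factor $R^{3/2}$ for all real $k$. The extra $\xja^{3/2}$ coming from $\de=\de_1+\de_2+\frac32$ compensates the one surplus factor in the product.

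The one false step is the classical law you quote in the second case, at its endpoint: $H^{k_1}\cdot H^{k_2}\subset H^{k_1+k_2-\frac32}$ fails when $\max(k_1,k_2)=\frac32$. With $k_1=k_2=\frac32$ it would make $H^{3/2}(\RRR^3)$ an algebra, which it is not. For $(k_1,k_2)=(\frac32,1)$, take $u=\log^{1/4}(1/|x|)$ cut off near $0$ (this lies in $H^{3/2}$), and $v_\lambda(x)=\lambda^{-1/2}\phi(x/\lambda)$ with $\phi\in C_c^\infty(\{1<|y|<2\})$. Then $\|v_\lambda\|_{H^1}$ stays bounded, but $\|u\nabla v_\lambda\|_{L^2}\gtrsim\log^{1/4}(1/\lambda)$ while $\|v_\lambda\nabla u\|_{L^2}\to0$, so $\|uv_\lambda\|_{H^1}\to\infty$. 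Functions supported near the origin do not see the weights, so this is a defect of the statement as written, not of your method: no argument can give \eqref{bilinear} there. What your proof correctly establishes in the second case is the range $\max(k_1,k_2)<\frac32$, and at the endpoint only with a loss, $k<k_1+k_2-\frac32$. That corrected version is all the paper uses later: Corollary~\ref{cor:MT3-endpoint} uses only the $\min$ case, and in Proposition~\ref{MT9} the required order $q-1$ lies strictly below $2q-\frac32$.
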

\begin{proof}
Note that \eqref{embedding} is the standard Sobolev property. The proof of \eqref{bilinear} is based on the usual bilinear estimate (without weight), i.e. the Sobolev spaces as a Banach algebra. See {\cite[Proposition 2] {MaoTao}} for more details. 
\end{proof}
\begin{cor}\label{cor:MT3-endpoint}
Let $k>\frac32$ and $\de\ge-\frac32$. Then, we have
\begin{equation}\label{eq:MT3-mixed}
\|uv\|_{H_b^{k,\de}}\les \|u\|_{H_b^{k,\de}}\|v\|_{H_b^{k,\de}}.
\end{equation}
In particular, $H_b^{k,-\frac32}$ is a Banach algebra.
\end{cor}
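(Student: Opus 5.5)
We derive Corollary \ref{cor:MT3-endpoint} directly from the bilinear estimate \eqref{bilinear} of Proposition \ref{MT3}, combined with the trivial monotonicity of the weighted spaces in the weight parameter. Take $k_1=k_2=k>\frac32$. Then $\max(k_1,k_2)>\frac32$ and $k_1+k_2>0$, so \eqref{bilinear} gives
\begin{equation*}
\|uv\|_{H_b^{k,\de_1+\de_2+\frac32}}\les \|u\|_{H_b^{k,\de_1}}\|v\|_{H_b^{k,\de_2}}
\end{equation*}
for every pair $\de_1,\de_2\in\RRR$.

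The natural choice is $\de_1=\de_2=\de$. This bounds $uv$ in $H_b^{k,2\de+\frac32}$. We then compare $2\de+\frac32$ with $\de$. Since $H_b^{k,\de'}=\xja^{-\de'}H_b^k$, we have the continuous inclusion $H_b^{k,\de'}\hookrightarrow H_b^{k,\de}$ whenever $\de'\ge\de$. Indeed, $w\in H_b^{k,\de'}$ means $\xja^{\de'}w\in H_b^k$. Writing $\xja^{\de}w=\xja^{\de-\de'}\cdot\xja^{\de'}w$, it suffices to note that multiplication by $\xja^{-a}$ with $a\ge0$ is bounded on $H_b^k$. For integer $k$ this holds because $\xja^i\pr^i\xja^{-a}=O(\xja^{-a})$ is bounded, so the Leibniz rule distributes derivatives harmlessly. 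For general real $k$ it follows by duality and interpolation, consistent with how the spaces are defined.

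The condition $2\de+\frac32\ge\de$ is exactly $\de\ge-\frac32$. This gives
\begin{equation*}
\|uv\|_{H_b^{k,\de}}\les\|uv\|_{H_b^{k,2\de+\frac32}}\les\|u\|_{H_b^{k,\de}}\|v\|_{H_b^{k,\de}},
\end{equation*}
which is \eqref{eq:MT3-mixed}.

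For the Banach algebra statement, set $\de=-\frac32$. Then $2\de+\frac32=\de$, so no inclusion is needed, and \eqref{bilinear} directly yields $\|uv\|_{H_b^{k,-\frac32}}\les\|u\|_{H_b^{k,-\frac32}}\|v\|_{H_b^{k,-\frac32}}$. After rescaling the norm by a constant, this is the Banach algebra property.

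The only point needing care is boundedness of multiplication by $\xja^{-a}$ on $H_b^k$ for non-integer $k$. We handle it by interpolating between the integer cases, for which the verification is elementary. Everything else is bookkeeping of exponents.
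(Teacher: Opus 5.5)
Your proof is correct and is essentially the paper's argument: both apply \eqref{bilinear} with $k_1=k_2=k$ and absorb the extra weight via the inclusion $H_b^{k,\de'}\hookrightarrow H_b^{k,\de}$ for $\de'\ge\de$. The only difference is where the inclusion is used. The paper takes $(\de_1,\de_2)=(\de,-\frac32)$, so the output weight is exactly $\de$, and applies $H_b^{k,\de}\hookrightarrow H_b^{k,-\frac32}$ to the factor $v$, giving the mixed bound $\|uv\|_{H_b^{k,\de}}\les\|u\|_{H_b^{k,\de}}\|v\|_{H_b^{k,-\frac32}}$. You take $\de_1=\de_2=\de$ and apply the inclusion to the product, and you also justify the monotonicity in the weight, which the paper uses without comment.
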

\begin{proof}
The conclusion follows directly from \eqref{bilinear} choosing $(k_1,\de_1)=(k,\de)$ and $(k_2,\de_2)=\bigl(k,-\frac32\bigr)$, together with the assumption that $\de\geq -\frac32$.
\end{proof}
\subsection{Outgoing properties}
\begin{df}\label{outgoingdf}
    Let $\Om\subseteq\RRR^3$ and let $K(x,y)\in\DD'(\RRR^3\times\RRR^3)$ be the Schwartz kernel of an operator that we also denote by $K$. Suppose $K$ has the property
    \begin{align*}
        u\in C^\infty_c(\RRR^3),\qquad \supp u\subset\ov{\Om}\Longrightarrow\supp Ku \subset\ov{\Om}.
    \end{align*}
    We say that $K$ is \emph{outgoing} on $\Om$ if there exists $C=C(\Om)>1$ such that
    \begin{align*}
        K(x,y)=0,\quad \mbox{ on }\quad\big\{x,y\in\Om\big/\, |y|\geq C |x|\big\}.
    \end{align*}
\end{df}
\begin{prop}\label{ex4}
    Let $\om\in\SSS^2$, $\th\in(0,\frac{\pi}{2})$, $\Om=\Om_{\om,\th}$ and $\chi\in C^\infty_c(\Om\cap\SSS^2)$. The integration kernels $K(x,y)=K^{ij}_\chi(x-y), L(x,y)=L^{ij}_{\chi,e_k}(x-y)$ are outgoing on $\Om$, where $e_k$ denotes the unit vector in the $x_k$ direction.
\end{prop}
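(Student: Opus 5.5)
The plan is to read everything off the support property in Theorem \ref{MT4}, using only the geometry of convex cones. Two facts about $K(x,y)=K_\chi(x-y)$ (and likewise $L(x,y)=L_{\chi,e_k}(x-y)$) are needed. First, the associated operator preserves support in $\ov\Om$. Second, the kernel vanishes on $\{x,y\in\Om:\ |y|\ge C|x|\}$ for a suitable $C=C(\Om)>1$. The arguments for $K$ and $L$ are identical, since Theorem \ref{MT4} gives both the same support property and $e_k$ plays no role.

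\textbf{Step 1: locate the support of the kernel.} Since $\chi\in C^\infty_c(\Om\cap\SSS^2)$, its support is a compact subset of the open cap $\{\angle(\cdot,\om)<\th\}$. Hence there is $\th_1<\th$ with $\supp\chi\subset\{\angle(\cdot,\om)\le\th_1\}$. Because $\th_1<\frac{\pi}{2}$, the cone $\Om_{\om,\th_1}$ can be described as
\[
\Om_{\om,\th_1}=\{z:\ \om\cdot z\ge\cos\th_1\,|z|\}.
\]
The map $z\mapsto \cos\th_1|z|-\om\cdot z$ is convex, so this set is a closed convex cone. It contains the cone over $\supp\chi$, hence also its convex hull. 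By Theorem \ref{MT4}, it follows that
\[
\supp K_\chi,\ \supp L_{\chi,e_k}\subset\Ga:=\Om_{\om,\th_1}\subset\Om .
\]
Consequently, $K(x,y)$ and $L(x,y)$ vanish, as distributions, on the open set $\{x-y\notin\Ga\}$.

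\textbf{Step 2: support preservation.} Take $u\in C^\infty_c$ with $\supp u\subset\ov\Om$. Then
\[
\supp(K_\chi*u)\subset\supp u+\supp K_\chi\subset\ov\Om+\Ga .
\]
The set $\ov\Om=\Om_{\om,\th}$ is a convex cone, again because $\th<\frac{\pi}{2}$. A convex cone is closed under addition, and $\Ga\subset\ov\Om$, so $\ov\Om+\Ga\subset\ov\Om$. This proves support preservation.

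\textbf{Step 3: the outgoing bound.} Suppose $x,y\in\Om$ and $x-y\in\Ga$. Then $\om\cdot(x-y)\ge\cos\th_1|x-y|\ge0$ and $\om\cdot y\ge\cos\th\,|y|$. Adding these gives
\[
|x|\ge\om\cdot x=\om\cdot(x-y)+\om\cdot y\ge\cos\th\,|y| .
\]
Therefore $|y|\le|x|/\cos\th$. Set $C:=2/\cos\th>1$. On the region $\{x,y\in\Om:\ |y|\ge C|x|\}$ with $(x,y)\ne(0,0)$, we get $|y|\ge C|x|>|x|/\cos\th$, so $x-y\notin\Ga$ and both kernels vanish there. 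The same inequality also covers the diagonal point $x=y$, where the homogeneous kernels are singular: there $|y|=|x|<C|x|$ unless $x=y=0$, a single point that is harmless for the distributional statement. Thus $C$ depends only on $\th$.

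\textbf{Main obstacle.} The only delicate point is Step 1: checking that the convex hull of the cone over $\supp\chi$ stays inside a strictly smaller convex cone $\Om_{\om,\th_1}$. This is where the hypotheses $\th<\frac{\pi}{2}$ and compact support of $\chi$ in the open cap are used. Once that inclusion holds, Steps 2 and 3 are the elementary inequalities above.
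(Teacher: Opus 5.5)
Your proof is correct and follows the paper's argument: Theorem \ref{MT4} places the supports of $K_\chi$ and $L_{\chi,e_k}$ in the convex cone $\Om$, and the inequality $\om\cdot(x-y)\le|x|-\cos\th\,|y|$ rules out $x-y\in\Om$ once $|y|$ is a large enough multiple of $|x|$ (the paper takes $C=\sec\th$, you take $2\sec\th$). You are more explicit than the paper about the convex-hull containment and about support preservation. However, the strictly smaller cone $\Om_{\om,\th_1}$ is not needed, since Step 3 only uses $\om\cdot(x-y)\ge0$, which holds for any $x-y\in\Om$.
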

\begin{proof}
    As an immediate consequence of Theorem \ref{MT4}, $K^{ij}_\chi$ and $L^{ij}_{\chi,e_k}$ are supported in $\Om$. It therefore suffices to show that there exists a constant $C_\Om>0$, depending only on $\Om$, such that
\[
x,y\in\Om,\quad |y|\ge C_\Om |x|
\quad\Longrightarrow\quad
x-y\notin\Om .
\]
Let $\Om=\Om_{\om,\th}$ with $\th\in(0,\frac{\pi}{2})$. For $x,y\in\Om$, we have
\[
x\cdot\om \le |x|,\qquad y\cdot\om \ge |y|\cos\th .
\]
Hence
\[
(x-y)\cdot\om= x\cdot\om - y\cdot\om\leq |x|-|y|\cos\th=(|x|\sec\th -|y|)\cos\th.
\]
Choosing $C_\Om := \sec\th$, it follows that $(x-y)\cdot\om\leq0$ and hence $x-y\notin\Om$. We conclude that
\[
x,y\in\Om,\quad |y|\ge C_\Om |x|
\quad\Longrightarrow\quad
K^{ij}_\chi(x-y)=L^{ij}_{\chi,e_k}(x-y)=0 .
\]
This proves that the solution operator $S$ is outgoing on $\Om$ in the sense of Definition \ref{outgoingdf}.
\end{proof}
\begin{rk}
Proposition \ref{ex4} relies on the following geometric fact: if $x,y\in\Om_{\om,\th}$ with $\th\in(0,\frac{\pi}{2})$ and $|y|\ge C|x|$ for $C$ sufficiently large, then $x-y$ is dominated by $-y$ and therefore points outside the convex cone $\Om_{\om,\th}$.

By contrast, the weaker condition $|y|>|x|$ is not sufficient. To see this, take $\om=(0,0,1)$ and $\th\in(\frac{\pi}{3},\frac{\pi}{2})$, and choose an angle $\alpha\in (\pi-2\th,\th)$. Set $x=(0,0,1)$ and $y=r(\sin\alpha,0,\cos\alpha)$ with $r>1$. Then $x,y\in\Om_{\om,\th}$ and $|y|>|x|$. Moreover, defining
\[
F(r):=\frac{(x-y)\cdot\om}{|x-y|}
=\frac{1-r\cos\alpha}{\sqrt{1+r^2-2r\cos\alpha}},
\]
we compute
\[
F(1)=\sin\frac{\alpha}{2}>\cos\th.
\]
By continuity of $F(r)$, it follows that $F(r)\ge\cos\th$ for all $r\in(1,1+\ep]$, provided $\ep>0$ is sufficiently small. Consequently, $x-y\in\Om_{\om,\th}$ for such values of $r$.
\end{rk}
We record the following mapping property for outgoing homogeneous distributions.
\begin{prop}\label{MT7}
Let $\si\in(0,3)$ and let
\[
K\in C^\infty(\RRR^3\setminus\{0\})\cap\DD'(\RRR^3)
\]
be a homogeneous distribution of degree $\si-3$. Assume that the kernel $K(x-y)$ is outgoing. Then, for any $\de<\frac{3}{2}-\si$ and any $k>\frac{3}{2}$, the convolution operator
\[
u \longmapsto u*K
\]
extends to a bounded map
\[
H_b^{k-\si,\de+\si} \longrightarrow H_b^{k,\de}.
\]
\end{prop}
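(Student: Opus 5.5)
The plan is to prove the mapping property by a dyadic (Littlewood--Paley in the radial variable) decomposition, using homogeneity of $K$ for scaling and the outgoing condition to kill the dangerous interactions. Fix a smooth dyadic partition of unity $1=\sum_{j\ge0}\psi_j(x)$ with $\psi_0$ supported in $\{|x|\le2\}$ and $\psi_j$ supported in $\{2^{j-1}\le|x|\le2^{j+1}\}$ for $j\ge1$. Split $u=\sum_m\psi_m u$ and $u*K=\sum_j\psi_j(u*K)$. The outgoing property says that $\psi_j\big((\psi_m u)*K\big)=0$ whenever $2^m\ge C'2^j$. Hence only the pairs with $m\le j+c$ contribute, where $c$ depends on $C(\Om)$: the output at scale $2^j$ only sees input from scales $\lesssim2^j$. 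On each dyadic piece the $H_b^{k,\de}$ norm is equivalent, after rescaling $x\mapsto2^jx$, to $2^{j(\de+\frac32)}$ times an unweighted $H^k$ norm on a fixed annulus.

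\textbf{Near-diagonal terms} ($|m-j|\le c$). Rescale by $2^j$. Since $K$ is homogeneous of degree $\si-3$, $K(2^jx)=2^{j(\si-3)}K(x)$. The rescaled operator is therefore convolution with the fixed kernel $K$ times $2^{j\si}$, applied to a function supported in a fixed annulus. Away from the origin $K$ is smooth, and near the origin it is a homogeneous distribution of degree $\si-3$, so its Fourier transform is homogeneous of degree $-\si$ and smooth off $0$. This gives local smoothing of order $\si$: the map $H^{k-\si}_{\mathrm{comp}}\to H^k_{\mathrm{loc}}$ is bounded. The low-frequency part of $\widehat K$ is harmless after cutting off to a compact set, since $\si>0$ and in 3D $|\xi|^{-\si}$ is locally integrable. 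Collecting powers, the contribution is bounded by
\[
2^{j(\de+\frac32)}2^{j\si}2^{-j(\de+\si+\frac32)}\|\psi_m u\|_{H_b^{k-\si,\de+\si}},
\]
which is uniformly bounded. Almost-orthogonality in $j$ then sums these pieces in $\ell^2$.

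\textbf{Off-diagonal terms} ($m<j-c$). Here $|x-y|\simeq|x|\simeq2^j$, so the kernel is smooth on the relevant region, with
\[
|\pr_x^\alpha K(x-y)|\les2^{j(\si-3-|\alpha|)}.
\]
The $b$-derivatives $\xja^{|\alpha|}\pr^\alpha$ therefore cost nothing. A crude $L^1$ bound gives
\[
\|\psi_j((\psi_m u)*K)\|_{H_b^{k,\de}}\les2^{j(\de+\frac32)}2^{j(\si-3)}2^{\frac{3j}{2}}\|\psi_m u\|_{L^1}.
\]
If $k-\si<0$, one first integrates by parts in $y$, moving derivatives onto the smooth kernel, so that only a negative-order norm of $\psi_m u$ is needed. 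Next, Cauchy--Schwarz gives
\[
\|\psi_m u\|_{L^1}\les2^{\frac{3m}{2}}2^{-m(\de+\si)}\|\psi_m u\|_{L^2}\sim2^{\frac{3m}{2}}2^{-m(\de+\si+\frac32)}\cdot2^{\frac{3m}{2}}\ldots,
\]
and the net factor becomes $2^{(j-m)(\de+\si-\frac32)}$. Since $\de<\frac32-\si$, this is geometrically decaying in $j-m$. A Schur test then sums over $m<j$ in $\ell^2$.

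\textbf{Main obstacle.} The main obstacle is the near-diagonal local estimate when $k-\si$ is small or negative. It needs a careful statement that convolution with a homogeneous kernel of degree $\si-3$, localized to compact sets, gains $\si$ derivatives in $H^s$ for all real $s$. I would do this by splitting $K=\eta K+(1-\eta)K$, with $\eta$ a bump near $0$. The piece $\eta K$ has Fourier transform a symbol of order $-\si$. The piece $(1-\eta)K$ is smooth, and after localization it is arbitrarily smoothing. Finally, the definition of $H_b$ for non-integer $k$ by interpolation is handled by proving the integer cases and interpolating.
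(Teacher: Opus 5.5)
Your proposal is correct. Where the paper simply cites \cite[Proposition 3]{MaoTao} and observes that, for an outgoing kernel, the estimate for the outgoing component needs only $\de<\frac32-\si$, you give a self-contained dyadic argument that makes this observation explicit:
the near-diagonal pieces are scale invariant by homogeneity and impose no condition on $\de$; the pieces with $m<j-c$ carry the factor $2^{(j-m)(\de+\si-\frac32)}$, which is exactly where $\de<\frac32-\si$ enters; and the pieces with $m>j+c$, which would carry $2^{-(m-j)(\de+\frac32)}$ and hence force $\de>-\frac32$, are precisely those removed by outgoingness.
Compared with the bare citation, this verifies directly that the endpoint $\de=-\frac32$ is legitimate. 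That endpoint is what Proposition~\ref{prop:S} needs for $K_\chi$ when $s=1$. Your argument also never uses $k>\frac32$.
The price is that you rely on the dyadic characterization $\|u\|_{H_b^{k,\de}}^2\simeq\sum_j 2^{2j(\de+\frac32)}\|(\psi_ju)(2^j\cdot)\|_{H^k}^2$ for all real $k$. This is standard, e.g.\ via the cylinder picture in $\log r$, but you should state it. ``Integer cases plus interpolation'' is not quite the right mechanism, since $k-\si$ is generally not an integer.

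Two corrections are needed.

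First, your off-diagonal display contains a spurious factor $2^{\frac{3j}{2}}$. The volume factor is already included in $2^{j(\de+\frac32)}$, so the bound is $2^{j(\de+\frac32)}2^{j(\si-3)}\|\psi_mu\|_{L^1}$. Cauchy--Schwarz should then read $\|\psi_mu\|_{L^1}\les 2^{\frac{3m}{2}}\|\psi_mu\|_{L^2}\les 2^{m(\frac32-\de-\si)}\|\psi_mu\|_{H_b^{0,\de+\si}}$. Taken literally, the extra factor would wreck the summation, but the net factor $2^{(j-m)(\de+\si-\frac32)}$ you state is the right one. For $k-\si<0$, the same factor (after accounting for the Jacobian $2^{3m}$) comes from pairing $v_m:=(\psi_mu)(2^m\cdot)\in H^{k-\si}$ against $w\mapsto K(x-2^mw)$. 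Its $w$-derivatives are $O(2^{j(\si-3)})$ for $|x|\sim2^j$ and $m<j$.

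Second, Definition~\ref{outgoingdf} only gives $K(x-y)=0$ for $x,y\in\Om$. So the vanishing of $\psi_j((\psi_mu)*K)$ for $m\ge j+c$ requires $\supp u\subset\ov\Om$, and the statement has to be read on $H_b^{k-\si,\de+\si}(\Om)$. Without this restriction, the interactions with $|y|\gg|x|$ return and $\de>-\frac32$ is needed again.
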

\begin{proof}
This result is derived in {\cite[Proposition 3] {MaoTao}}. In particular, in the proof of {\cite[Proposition 3] {MaoTao}}, the estimate for the outgoing component requires only the assumption $\de<\frac{3}{2}-\si$.
\end{proof}
\begin{lem}\label{MT8}
    Let $k>\frac{3}{2}$ and $\de\geq-\frac{3}{2}$. Then, in a small neighborhood of $\de_{ij}$, the inverse matrix map
    \begin{align*}
        \de_{ij}+H_b^{k,\de}&\to\de_{ij}+H_b^{k,\de},\\
        g_{ij}&\mapsto g^{ij}
    \end{align*}
    is smooth.
\end{lem}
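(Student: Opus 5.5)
We prove Lemma \ref{MT8} with a Neumann series, using that $H_b^{k,\de}$ is closed under multiplication in the sense of Corollary \ref{cor:MT3-endpoint}.

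\textbf{Setting up the series.} Write $g=\de+a$ with $a\in H_b^{k,\de}$, a symmetric $3\times 3$ matrix of functions with $\|a\|_{H_b^{k,\de}}\le\eta$ for some small $\eta>0$. The candidate inverse is
\[
g^{-1}=\de+\sum_{n\ge1}(-1)^n a^n .
\]
Here $a^n$ is the $n$-fold matrix product, whose entries are finite sums of products of entries of $a$.

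\textbf{Convergence.} Corollary \ref{cor:MT3-endpoint} applies because $k>\frac32$ and $\de\ge-\frac32$. It gives a constant $C$ with
\[
\|a^n\|_{H_b^{k,\de}}\le (3C)^{n-1}\|a\|_{H_b^{k,\de}}^n .
\]
The factor $3$ comes from summing over the matrix index. If $\eta<(3C)^{-1}$, the series converges absolutely in $H_b^{k,\de}$, so $g^{-1}-\de\in H_b^{k,\de}$. By \eqref{embedding}, $H_b^{k,\de}\hookrightarrow L^\infty$ since $\frac32+\de\ge0$, so the series also converges uniformly. Hence its sum is the pointwise matrix inverse of $g$, which confirms the inverse map is well defined into $\de+H_b^{k,\de}$.

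\textbf{Smoothness.} Each map $a\mapsto a^n$ is the restriction to the diagonal of a bounded $n$-linear map on $H_b^{k,\de}$, so it is a continuous homogeneous polynomial. The polynomial series converges in norm uniformly on the ball of radius $\eta$. A map given by such a norm-convergent power series is real-analytic on that ball, in particular $C^\infty$. One can also see this directly: the Fréchet derivative is $b\mapsto -g^{-1}bg^{-1}$, which is bounded on $H_b^{k,\de}$ by the algebra property, and iterating this formula gives all higher derivatives.

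\textbf{Main obstacle.} The only delicate point is the multiplication estimate at the weight $\de$ itself, which is why the hypothesis $\de\ge-\frac32$ is needed. Corollary \ref{cor:MT3-endpoint} settles it, since the extra decay carried by one factor through $H_b^{k,\de}\subset H_b^{k,-\frac32}$ is what is used. Note also that the constant $\de$ in the inverse is not itself in $H_b^{k,\de}$, but it does not need to be: only the correction $g^{-1}-\de$ has to lie in that space.
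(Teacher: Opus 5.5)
Your proposal is correct and takes essentially the same approach as the paper. The paper defers to Mao--Tao's Corollary 5 and observes that the argument needs only two ingredients: the $L^\infty$ control from \eqref{embedding} and the endpoint algebra property \eqref{eq:MT3-mixed}. Your Neumann-series argument uses exactly these two ingredients and simply writes the argument out explicitly.
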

\begin{proof}
The statement is proved in \cite[Corollary 5]{MaoTao} under the assumption $\de>-\frac32$. We briefly explain why the same argument extends to the endpoint case $\de=-\frac32$. The proof in \cite{MaoTao} is based on two ingredients. The first is the $L^\infty$ control provided by the Sobolev embedding at the critical weight $\de=-\frac32$, see \eqref{embedding}. The second is the boundedness of the multiplication in the weighted space $H_b^{k,\de}$, which holds for all $\de\geq -\frac32$ by \eqref{eq:MT3-mixed}. Since no additional decay is required in the argument, these two properties suffice to establish the smoothness of the inverse matrix map at the endpoint. Therefore, the conclusion of \cite[Corollary 5]{MaoTao} remains valid for $\de\geq -\frac32$.
\end{proof}
\begin{prop}\label{MT9}
For $q>\frac{1}{2}$ and $\de\geq -\frac{3}{2}$, the operator
\begin{align*}
\Phi:\; H_b^{q+1,\de}\times H_b^{q,\de+1} &\longrightarrow H_b^{q-1,\de+2},\\
( h,\pi )\ \qquad &\longmapsto \bigl(h\c \pr^2 h,\; \pr h\c \pr h,\; \pi\c \pi,\; h\c \pr\pi,\; \pr h\c \pi\bigr)
\end{align*}
is smooth in a small neighborhood of $(0,0)$.
\end{prop}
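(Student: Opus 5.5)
Each component of $\Phi$ is a product of two factors drawn from $\{h,\pr h,\pr^2 h,\pi,\pr\pi\}$. It is also a continuous bilinear (or, with the inverse metric, multilinear) expression. So the plan is to prove a bilinear estimate for each term into $H_b^{q-1,\de+2}$ and then deduce smoothness: a continuous bilinear map between Banach spaces is $C^\infty$. If the precise nonlinearity in \eqref{hpieq} also involves $g^{ij}$ through the Christoffel symbols and scalar curvature, we first write $g^{ij}=\de^{ij}+\tilde h^{ij}(h)$. Lemma \ref{MT8} then shows that $h\mapsto g^{ij}-\de^{ij}\in H_b^{q+1,\de}$ is smooth near $0$, because $g-\de$ is a linear function of $h$ by \eqref{hpiinverse}. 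Each term in $M,N$ thus becomes a finite sum of products of smooth maps into $H_b$-spaces. Smoothness follows once the products are continuous multilinear maps into the target.

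The basic tools are the mapping properties of derivatives on $b$--spaces: $\pr: H_b^{k,\de}\to H_b^{k-1,\de+1}$ is bounded, directly from Definition \ref{bSobolev}, since $\pr$ costs one derivative and gains one power of $\xja$. Hence
\[
h\in H_b^{q+1,\de},\quad \pr h\in H_b^{q,\de+1},\quad \pr^2 h\in H_b^{q-1,\de+2},\quad \pi\in H_b^{q,\de+1},\quad \pr\pi\in H_b^{q-1,\de+2}.
\]
We then apply \eqref{bilinear} term by term. For $h\c\pr^2h$ and $h\c\pr\pi$, take $(k_1,\de_1)=(q+1,-\frac32)$ and $(k_2,\de_2)=(q-1,\de+2)$. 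Here $\de_1+\de_2=\de+2-\frac32$ matches the target weight $\de+2$. Also $\max(k_1,k_2)=q+1>\frac32$, so the output regularity is $\min=q-1$. Moreover $\|h\|_{H_b^{q+1,-3/2}}\les\|h\|_{H_b^{q+1,\de}}$ because $\de\ge-\frac32$; this is exactly where the hypothesis enters. For $\pr h\c\pr h$, $\pr h\c\pi$ and $\pi\c\pi$, put one factor in $H_b^{q,\de+1}$ and the other in $H_b^{q,\frac12}$. The factor placed in $H_b^{q,\frac12}$ is again controlled, since $\de+1\ge-\frac12\ldots$

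The expected obstacle is precisely this weight bookkeeping for the symmetric terms. The rule $\de_1+\de_2=\de+2-\frac32$ with $\de_1=\de+1$ forces $\de_2=-\frac12$, and $H_b^{q,\de+1}\subset H_b^{q,-\frac12}$ since $\de+1\ge-\frac12$. The bilinear estimate then lands in $H_b^{k,\de+2}$ with $k=q$ if $q>\frac32$, or $k=2q-\frac32\ge q-1$ if $q\le\frac32$ (using $q>\frac12$). In both cases this embeds into $H_b^{q-1,\de+2}$.

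The low-regularity case $q\le\frac32$ in the first group needs care. We have $k_1=q+1>\frac32$ always, so the first branch of \eqref{bilinear} applies. The condition $k_1+k_2=2q>0$ also holds, and this is where $q>\frac12$ is used. We also need the case $k_2=q-1$ negative to be admissible in \eqref{bilinear}, which is allowed since $k$ is extended to $\RRR$ by duality. Combining the term-wise continuity of the bilinear maps with the smooth composition from Lemma \ref{MT8} gives smoothness of $\Phi$ in a neighborhood of $(0,0)$.
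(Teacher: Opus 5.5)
Your proof is correct and follows the paper's argument. It uses the same term-by-term application of \eqref{bilinear} to products of $h,\pr h,\pr^2 h,\pi,\pr\pi$, the same split between the $\min$ case and the case $2q-\frac32>q-1$ for the Sobolev order, and the same appeal to Lemma~\ref{MT8} for the inverse metric. The only cosmetic difference is that you lower one factor to the endpoint weight ($-\frac32$ or $-\frac12$) before multiplying, whereas the paper multiplies at the actual weights and then uses $2\de+\frac72\ge\de+2$.

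There is one slip: you first write $H_b^{q,\frac12}$ for the second factor in the symmetric terms. It must be $H_b^{q,-\frac12}$, as you correctly use in the following paragraph, since $H_b^{q,\de+1}\subset H_b^{q,\frac12}$ fails for $\de<-\frac12$.
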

\begin{proof}
By Definition \ref{bSobolev}, the assumptions
\[
h\in H_b^{q+1,\de},\qquad\quad\pi\in H_b^{q,\de+1}
\]
imply
\[
\pr h\in H_b^{q,\de+1},\qquad\pr^2 h\in H_b^{q-1,\de+2},\qquad\pr\pi\in H_b^{q-1,\de+2}.
\]
Consequently, each component of $\Phi(h,\pi)$ is a product of two factors chosen from
\[
h,\quad \pr h,\quad \pr^2 h,\quad \pi,\quad \pr\pi.
\]
We estimate each such product using the bilinear estimate \eqref{bilinear}. In all cases, the output weight is given by
\[
(\de+1)+(\de+1)+\frac32\quad\text{ or }\quad\de+(\de+2)+\frac32,
\]
both of which are equal to $2\de+\frac72$. Since $\de\ge -\frac32$, we have
\[
2\de+\frac72 \geq \de+2,
\]
with equality at the endpoint $\de=-\frac32$.

For the Sobolev order, two cases arise. If one factor has the Sobolev order strictly greater than $\frac32$ (for example, $h\in H_b^{q+1,\de}$), then the $\min$--case of \eqref{bilinear} applies. Otherwise, \eqref{bilinear} yields the Sobolev order $2q-\frac32$, which is strictly greater than $q-1$ since $q>\frac12$. In either case, each component of $\Phi(h,\pi)$ belongs to $H_b^{q-1,\de+2}$ and continuously depends on $(h,\pi)$.

Finally, since the dependence of $\Phi(h,\pi)=(M(h,\pi),N^j(h,\pi))$ on $(h,\pi)$ involves only multiplication and the inverse matrix map, Proposition \ref{MT3} and Lemma \ref{MT8} implies that $\Phi$ is smooth in a neighborhood of $(0,0)$. This concludes the proof of Proposition \ref{MT9}.
\end{proof}
\section{Proof of the main theorem}
The proof has three components. In Section \ref{prop:S} we follow \cite{MaoTao} and define an outgoing right inverse $S$ for the linear system \eqref{prprh} using the conical kernels $K_\chi$ and $L_{\chi,v}$ from Theorem \ref{MT4}, and we show its boundedness on the relevant $H_b$ spaces. In Section \ref{sec4.1} we construct explicit seed solutions $(h_0,\pi_0)$ of \eqref{prprh} with prescribed borderline $H_b$ behavior. In Section \ref{s=1initial} we solve the full constraint equation \eqref{MT1.1} in the form \eqref{PPhi} by a contraction argument, using the fixed point formulation \eqref{fixedpointeq} and the bounds for $S$ and $\Phi$. Finally, we show that the borderline $H_b$ behavior persists despite the nonlinear correction.
\subsection{Solution operator for the divergence equations}
We first fix a conical region and package kernels from Theorem \ref{MT4} into a solution operator for \eqref{prprh}.
\begin{prop}\label{prop:S}
Let $\om\in\SSS^2$, $\th\in(0,\frac{\pi}{2})$, and fix a cone $\Om=\Om_{\om,\th}\subset\RRR^3$. Let $\chi\in C^\infty(\Om\cap\SSS^2)$ be a cutoff function, and let $K_\chi^{ij}$ and $L_{\chi,e_k}^{ij}$ be the distributions constructed in Theorem \ref{MT4}. For distributions $f,F^j\in\DD'(\RRR^3)$, we define the solution operator $S$ by
\begin{align*}
S(f,F^j):=\bigl(K_\chi^{ij} * f,\; L_{\chi,e_k}^{ij}*F^k\bigr),\qquad j=1,2,3.
\end{align*}
Then the following properties hold:
\begin{enumerate}
    \item The operator $S$ provides a right inverse of $P$ on the distributions supported in $\Om$, that is,
    \[
    PS(f,F^j)=(f,F^j)
    \]
    for all $(f,F^j)$ with $\supp(f,F^1,F^2,F^3)\subset\Om$.
    \item For any $s\ge 1$ and $q\in\NNN$, the operator $S$ extends to a bounded map
    \[
    S:\; H_b^{q-1,\frac{s}{2}}(\Om)
    \longrightarrow
    H_b^{q+1,\frac{s-4}{2}}(\Om)\times H_b^{q,\frac{s-2}{2}}(\Om).
    \]
\end{enumerate}
\end{prop}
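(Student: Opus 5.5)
For part~1, I would compute $PS(f,F^j)$ directly at the level of distributions. Since differentiation commutes with convolution, \eqref{prprhde0} gives
\[
\pr_i\pr_j\bigl(K^{ij}_\chi*f\bigr)=\bigl(\pr_i\pr_jK^{ij}_\chi\bigr)*f=\de_0*f=f .
\]
Summing \eqref{prpide0} over $k$ with $v=e_k$ gives
\[
\pr_i\bigl(L^{ij}_{\chi,e_k}*F^k\bigr)=(\de_0\,e_k^j)*F^k=F^j .
\]
The only care needed is that these convolutions are well defined. For $f,F^j$ supported in $\ov\Om$, the kernels are supported in the convex cone $\Om$ (Theorem~\ref{MT4}, using $\supp\chi\subset\Om\cap\SSS^2$). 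The outgoing property of Proposition~\ref{ex4} then restricts $y$ to the compact set $\{y\in\Om:\,|y|\le C_\Om|x|\}$ for each fixed $x$. The same support argument shows $\supp S(f,F^j)\subset\ov\Om+\ov\Om=\ov\Om$, so $S$ preserves conical support.

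For part~2, I would apply Proposition~\ref{MT7} componentwise. The kernel $K^{ij}_\chi$ is homogeneous of degree $-1=\si-3$ with $\si=2$. With $k=q+1$ and $\de=\frac{s-4}{2}$, Proposition~\ref{MT7} maps $H_b^{q-1,\frac s2}$ boundedly into $H_b^{q+1,\frac{s-4}{2}}$, since $\de+\si=\frac s2$. This requires $q+1>\frac32$, which holds for $q\in\NNN$, $q\ge1$, and also $\frac{s-4}{2}<-\frac12$, i.e.\ $s<3$. The kernel $L^{ij}_{\chi,e_k}$ has degree $-2$, so $\si=1$. With $k=q$ and $\de=\frac{s-2}{2}$ it maps $H_b^{q-1,\frac s2}$ into $H_b^{q,\frac{s-2}{2}}$, under the condition $\frac{s-2}{2}<\frac12$, again $s<3$. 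For $q=1$ one has $k=q=1<\frac32$, so I would instead use the interpolation/duality extension of the $H_b$ scale in Definition~\ref{bSobolev} to run the Mao--Tao argument at that order. Outgoingness is Proposition~\ref{ex4}, and the restriction to $\Om$ follows from the support statement above. This settles the range $s\in[1,3)$.

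The main obstacle is the range $s\ge3$, which the statement also asserts. There the hypothesis $\de<\frac32-\si$ of Proposition~\ref{MT7} fails, and I expect this to be a genuine obstruction rather than an artifact of the method. For $f$ decaying fast in $\Om$, outgoingness gives
\[
K_\chi*f(x)=K_\chi(x)\int f\,dy+O(|x|^{-2})
\]
along rays. So the output generically decays only like $|x|^{-1}$, and such a function is not in $H_b^{0,\frac{s-4}{2}}$ once $s\ge3$: the weighted $L^2$ norm diverges logarithmically already at $s=3$. Similarly, $L*F$ has a leading term $L(x)\int F$ of size $|x|^{-2}$, which fails to lie in $H_b^{0,\frac{s-2}{2}}$ for $s\ge3$. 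My approach would therefore be to subtract these leading moment terms. I would Taylor-expand $K_\chi(x-y)$ and $L(x-y)$ in $y$ on the region $|y|\le|x|/2$, where the kernels are smooth. The region $|x|/2<|y|\le C_\Om|x|$ is handled as in the $s<3$ case. The remainders then gain the extra decay needed, but only under the vanishing of the moments $\int y^\a f$ and $\int y^\a F$ for $|\a|\le\lfloor\frac{s-3}{2}\rfloor+1$.

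Without such moment conditions, I do not see how to reach the stated target space for $s\ge3$. The strongest unconditional conclusion would be
\[
S:H_b^{q-1,\frac s2}(\Om)\to H_b^{q+1,\frac{s''-4}{2}}(\Om)\times H_b^{q,\frac{s''-2}{2}}(\Om)\quad\text{for every } s''<3,
\]
obtained from the $s<3$ case via the inclusion $H_b^{q-1,\frac s2}\subset H_b^{q-1,\frac{s''}{2}}$. This is strictly weaker than what part~2 claims when $s\ge3$. The main theorem only uses the range $s\in[1,3)$, which is fully covered by the argument above.
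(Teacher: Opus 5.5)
Your argument is the paper's: part~1 moves the derivatives onto the kernels via \eqref{prprhde0}--\eqref{prpide0}, and part~2 applies Proposition~\ref{MT7} with $\si=2$, $\de=\frac{s-4}{2}$ for $K_\chi$ and $\si=1$, $\de=\frac{s-2}{2}$ for $L_{\chi,e_k}$. You are in fact more careful than the paper, which neither justifies the convolutions nor checks the hypotheses of Proposition~\ref{MT7}.

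Your objection to $s\ge3$ is correct, and it is a defect of the statement, not of your proof. The condition $\de<\frac32-\si$ holds for both components exactly when $s<3$, and your leading-term computation is a genuine counterexample. To make it airtight, note that $K_\chi$ cannot vanish identically on $\RRR^3\setminus\{0\}$. Otherwise it would be supported at the origin, hence a combination of derivatives of $\de_0$, all homogeneous of degree $\le-3$; this forces $K_\chi=0$, contradicting $\pr_i\pr_jK^{ij}_\chi=\de_0$. By smoothness and homogeneity, $|K_\chi(x)|\ge c|x|^{-1}$ on an open subcone of $\Om$. Take $f\in C^\infty_c$ supported in the interior of $\Om$ with $\int f\ne0$. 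Then $|K_\chi*f(x)|\ges|x|^{-1}$ on that subcone for $|x|$ large, so $\|\xja^{\frac{s-4}{2}}K_\chi*f\|_{L^2}^2\ges\int_R^\infty r^{s-4}\,dr=\infty$ when $s\ge3$. The same argument works for $L_{\chi,e_k}$. So part~2 should be stated for $s\in[1,3)$, which is all Theorem~\ref{s=1initial} uses.

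The one soft spot is $q=1$, where the $L_{\chi,e_k}$ component needs Proposition~\ref{MT7} at $k=q=1<\frac32$. Appealing to interpolation/duality is not an argument as it stands, for two reasons:
\begin{itemize}
\item interpolation would need a second endpoint below $k=1$;
\item the adjoint of $F\mapsto L_{\chi,e_k}*F$ is convolution with $L_{\chi,e_k}(-\,\cdot)$, which is incoming, so Proposition~\ref{MT7} does not apply to it.
\end{itemize}
You would have to reopen the Mao--Tao estimate at that order. The paper's proof ignores the $k>\frac32$ hypothesis as well, so this is an omission you share with the paper rather than a divergence from it.
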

\begin{proof}
From Theorem \ref{MT4}, we have
\begin{align*}
\pr_i\pr_j K_\chi^{ij} * f &= \de_0 * f = f,\\
\pr_i L_{\chi,e_k}^{ij} * F^k &= (e_k)^j\de_0*F^k = F^j,\qquad j=1,2,3.
\end{align*}
Thus, we obtain
\begin{align*}
PS(f,F^j) = (f,F^j),
\end{align*}
as claimed. Moreover, $\supp K_\chi,\supp L_{\chi,e_k}\subset\Om$ by Theorem \ref{MT4} and thus
\[
\supp(K_\chi^{ij}*f)\subset \supp K_\chi+\supp f\subset\Om, \qquad \supp(L_{\chi,e_k}^{ij}*F^k)\subset \supp L_{\chi,e_k}+\supp g\subset\Om,
\]
where we used the fact that $\Om$ is a cone, hence $\Om+\Om\subset\Om$. In particular, if $\supp(f,F^j)\subset\Om$, then 
\[\supp S(f,F^j)\subset\Om.\]
Together with Proposition \ref{ex4}, this shows that $S$ is an outgoing operator in the sense of Definition \ref{outgoingdf}. Finally, since $K_\chi^{ij}$ and $L_{\chi,e_k}^{ij}$ are homogeneous distributions of degree $-1$ and $-2$, respectively, we apply Proposition \ref{MT7} with $\de=\frac{s-4}{2}$. This implies that the convolution operator $f\mapsto K_\chi^{ij}*f$ is bounded from $H_b^{q-1,\frac{s}{2}}(\Om)$ to $H_b^{q+1,\frac{s-4}{2}}(\Om)$, while $F^j\mapsto L_{\chi,e_k}^{ij}*F^k$ is bounded from $H_b^{q-1,\frac{s}{2}}(\Om)$ to $H_b^{q,\frac{s-2}{2}}(\Om)$. Consequently, the operator $S$ extends to a bounded map
\[
H_b^{q-1,\frac{s}{2}}(\Om)\longrightarrow H_b^{q+1,\frac{s-4}{2}}(\Om)\times H_b^{q,\frac{s-2}{2}}(\Om).
\]
This concludes the proof of Proposition \ref{prop:S}.
\end{proof}
%============================================================
\subsection{Construction of solutions to the linearized equation}
\label{sec4.1}
%============================================================
We construct explicit cone-supported solutions to the linearized constraints \eqref{prprh} at the sharp $b$–Sobolev weights. The construction is formulated with a general \emph{borderline decay weight function} $L(r)$, assumed nondecreasing and subject to certain integrability and derivative bounds. One may keep in mind the model choice $L(r)=\log(2+r)$; see Remark~\ref{rk:examples-L} for further nontrivial examples.
\begin{prop}[Cone-supported seed with sharp $s$--weight]\label{proph0pi0}
Fix $\ep_0>0$, $s\in[1,3)$ and $q\in\NNN$. Let $L:[0,\infty)\to(0,\infty)$ be nondecreasing and assume there exists $R_1\ge1$ such that:
\begin{align}
\int_{R_1}^{\infty}\frac{dr}{r\,L(r)^2}<\infty, \qquad \int_{R_1}^{\infty}\frac{r^{\de-1}}{L(r)^2}\,dr=\infty\quad\forall\,\de>0,
\label{eq:L-conditions}
\end{align}
and there exists $C_k>0$ such that
\begin{align}
L\in C^{q+3}([R_1,\infty)),\qquad \big|(L^{-1})^{(k)}(r)\big|\le\frac{C_k}{r^k\,L(r)}\quad(0\le k\le q+3,\ r\ge R_1).
\label{eq:Linv-symbol}
\end{align}
Moreover, we assume that $L(r)$ satisfies
\begin{align}
\lim_{r\to\infty} r\,\frac{L'(r)}{L(r)}=0.
\label{eq:L-slowvar}
\end{align}
Then, there exists $(h_0,\pi_0)$ supported in $\ov\Om=\ov{\Om_{\om,\th}}$ with $\th\in (0,\frac{\pi}{2})$, solving \eqref{prprh} such that:
\begin{enumerate}
\item \emph{(Critical $s$ bound)} \quad
\begin{equation}\label{upperbound}
\|(h_0,\pi_0)\|_{H_b^{q+1,\frac{s-4}{2}}(\RRR^3)\times H_b^{q,\frac{s-2}{2}}(\RRR^3)}\les\ep_0.
\end{equation}
\item \emph{(Sharpness)} For any $s'>s$ and $q\in\NNN$,
\[
h_0\notin H_b^{q+1,\frac{s'-4}{2}}(\RRR^3),
\qquad
\pi_0\notin H_b^{q,\frac{s'-2}{2}}(\RRR^3).
\]
\end{enumerate}
\end{prop}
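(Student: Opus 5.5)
We follow the outline of Section \ref{subsec:outline-s1-vs-sp}. Fix a subcone $\Om_o=\Om_{\om,\th/2}$ and an angular cutoff $\eta\in C^\infty(\SSS^2)$ with $\eta=1$ on $\Om_o\cap\SSS^2$ and $\supp\eta\subset\Om\cap\SSS^2$. Fix also a radial cutoff $\zeta(r)$ vanishing for $r\le R_1$ and equal to $1$ for $r\ge 2R_1$. Set
\[
\phi_h(x):=\ep_0\,\zeta(r)\eta(x/|x|)\frac{r^{\frac{5-s}{2}}}{L(r)},\qquad \phi_\pi(x):=\ep_0\,\zeta(r)\eta(x/|x|)\frac{r^{\frac{3-s}{2}}}{L(r)},
\]
and define $(h_0,\pi_0)$ by \eqref{eq:outline-seed}. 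These functions are $C^{q+3}$, since $L\in C^{q+3}$, and are supported in $\ov\Om$.

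\textbf{The linearized equations.} These are pure algebra. We have $\pr_i\pr_j(\pr^i\pr^j\phi-\de^{ij}\De\phi)=\De^2\phi-\De^2\phi=0$ and $\pr_i(\pr^i\pr^j\phi-\de^{ij}\De\phi)=\pr^j\De\phi-\pr^j\De\phi=0$, so both identities hold pointwise wherever $\phi$ is $C^4$ (respectively $C^3$). If $q$ is small, we interpret them distributionally. Symmetry of $h_0,\pi_0$ is obvious. Hence \eqref{prprh} holds.

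\textbf{Upper bound \eqref{upperbound}.} By \eqref{eq:Linv-symbol} and the Leibniz rule, $\phi_h$ is a symbol of order $\frac{5-s}{2}$ divided by $L$:
\[
|\pr^\ell\phi_h|\les\ep_0\, r^{\frac{5-s}{2}-\ell}/L(r)\qquad\text{for }\ell\le q+3,
\]
because angular derivatives of $\eta(x/|x|)$ cost $r^{-1}$ and derivatives of $r^a$ and $L^{-1}$ each cost $r^{-1}$. Hence
\[
|\pr^\ell h_0|\les\ep_0 r^{-\frac{s-1}{2}-\ell}/L\quad(\ell\le q+1),\qquad |\pr^\ell\pi_0|\les\ep_0 r^{-\frac{s+1}{2}-\ell}/L\quad(\ell\le q).
\]
For $h_0$ the $H_b^{q+1,\frac{s-4}{2}}$ norm squared is $\sum_{\ell}\int \xja^{s-4}\xja^{2\ell}|\pr^\ell h_0|^2\,dx$. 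Inserting the bound, the integrand at large $r$ is $\ep_0^2 r^{s-4}r^{1-s}L^{-2}=\ep_0^2 r^{-3}L^{-2}$. Integrating in $r^2dr$ gives
\[
\ep_0^2\int_{R_1}^\infty\frac{dr}{rL(r)^2}<\infty
\]
by \eqref{eq:L-conditions}. The compact region $r\le 2R_1$ contributes $O(\ep_0^2)$. The computation for $\pi_0$ with weight $\frac{s-2}{2}$ is identical.

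\textbf{Sharpness.} This is the step needing a genuine lower bound, and it is where \eqref{eq:L-slowvar} enters. On $\Om_o\cap\{r\ge 2R_1\}$, $\phi_h=\ep_0 r^{a}/L$ with $a=\frac{5-s}{2}$ is radial. The Hessian of a radial function $f(r)$ has eigenvalues $f''$ and $f'/r$ (twice), so
\[
h_0=\nab^2\phi_h-\de\De\phi_h
\]
has eigenvalue $-2f'/r$ in the radial direction. Now
\[
f'=\ep_0 r^{a-1}L^{-1}\bigl(a-rL'/L\bigr),
\]
and by \eqref{eq:L-slowvar} this is $\ge\frac a2\ep_0r^{a-1}/L$ for $r$ large, since $a=\frac{5-s}2>1>0$. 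Therefore
\[
|h_0|\ge a\,\ep_0 r^{-\frac{s-1}{2}}/L
\]
on $\Om_o\cap\{r\ge R_2\}$. Taking only the $\ell=0$ term of the $H_b^{q+1,\frac{s'-4}{2}}$ norm and using that $\Om_o\cap\SSS^2$ has positive measure gives
\[
\|h_0\|^2\gtrsim\ep_0^2\int_{R_2}^\infty r^{s'-s-1}L^{-2}\,dr=\infty
\]
by the second condition in \eqref{eq:L-conditions} with $\de=s'-s>0$. For $\pi_0$ the same argument applies with $a=\frac{3-s}{2}>0$, since $s<3$. Here the slow variation is essential, because $a$ may be small. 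The radial eigenvalue $-2f'/r$ is bounded below by $a\ep_0 r^{-\frac{s+1}{2}}/L$, and the integral again diverges with $r^{s'-2}\cdot r^{-s-1}\cdot r^2=r^{s'-s-1}$.

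The main care points are two. The first is the symbol bound for $L^{-1}$ times cutoffs, which is routine. The second is ensuring that no cancellation kills the lower bound. That is handled by the eigenvalue computation together with \eqref{eq:L-slowvar}, which keeps $f'$ positive in both cases $a=\frac{5-s}{2}$ and $a=\frac{3-s}{2}$.
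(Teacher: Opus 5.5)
Your proposal is correct and takes essentially the same route as the paper. You use the same cone- and radially-cut-off potentials $\ep_0 r^{\frac{5-s}{2}}/L$ and $\ep_0 r^{\frac{3-s}{2}}/L$, the same symbol bounds with polar integration against $\int_{R_1}^\infty \frac{dr}{rL(r)^2}<\infty$ for the upper bound, and the same radial eigenvalue $-2\phi'/r$ of $\pr^2\phi-\de\De\phi$ combined with \eqref{eq:L-slowvar} for the divergent lower bound. Your radial cutoff, which vanishes on $r\le R_1$, is slightly cleaner than the paper's: it only evaluates $L$ where the $C^{q+3}$ and symbol assumptions \eqref{eq:Linv-symbol} hold.
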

\begin{proof}
For the cone $\Om=\Om_{\om,\th}$ and choose $\chi\in C^\infty(\SSS^2)$, $\eta\in C^\infty([0,\infty))$ with
\[
\supp\chi\subset \Om\cap\SSS^2,\ \ \chi\equiv 1\text{ on }\Om_o\cap\SSS^2,\ \ \Om_o:=\Om_{\om,\th/2}, \qquad \eta=0\text{ on }[0,1],\ \eta=1\text{ on }[R_1,\infty).
\]
Let $r=|x|$, and define scalar functions
\begin{align*}
    \rho_h(r):=\frac{r^{\frac{5-s}{2}}}{L(r)},&\qquad \rho_\pi(r):=\frac{r^{\frac{3-s}{2}}}{L(r)}, \\ \phi_h(x):=\ep_0\,\eta(r)\rho_h(r)\chi\left(\frac{x}{|x|}\right),&\qquad \phi_\pi(x):=\ep_0\,\eta(r)\rho_\pi(r)\chi\left(\frac{x}{|x|}\right),
\end{align*}
The linear seed data $(h_0,\pi_0)$ then is constructed as
\begin{equation}\label{h0def}
h_0^{ij}:=\pr^i\pr^j\phi_h-\de^{ij}\De\phi_h, \qquad \pi_0^{ij}:=\pr^i\pr^j\phi_\pi-\de^{ij}\De\phi_\pi.
\end{equation}
Then it's straightforward to verify $\pr_i\pr_j h_0^{ij}=0$ and $\pr_i\pi_0^{ij}=0$, hence \eqref{prprh}, and $\supp(h_0,\pi_0)\subset\ov\Om$.
\paragraph{Upper bounds.}
On $r\ge R_1$ we have $\eta\equiv 1$, hence $\phi_h=\ep_0\rho_h(r)\chi\left(\frac{x}{|x|}\right)$ and $\phi_\pi=\ep_0\rho_\pi(r)\chi\left(\frac{x}{|x|}\right)$. We claim that for all integers $m\ge0$ and $r\ge R_1$,
\begin{equation}\label{eq:rho-deriv}
\big|\pr_r^m\rho_h(r)\big|\le C_m\,\frac{r^{\frac{5-s}{2}-m}}{L(r)}, \qquad \big|\pr_r^m\rho_\pi(r)\big|\le C_m\,\frac{r^{\frac{3-s}{2}-m}}{L(r)}.
\end{equation}
Indeed, $\rho_h(r)=r^{\frac{5-s}{2}}L(r)^{-1}$ and $\rho_\pi(r)=r^{\frac{3-s}{2}}L(r)^{-1}$; differentiating and applying Leibniz, every term is a product of a derivative of $r^{\gamma}$ (giving $r^{\gamma-\ell}$) and a derivative of $L^{-1}$. The bound \eqref{eq:Linv-symbol} gives $(L^{-1})^{(k)}(r)\lesssim r^{-k}L(r)^{-1}$, which yields \eqref{eq:rho-deriv}.

Next, angular derivatives satisfy $|\pr^\alpha(\chi\left(\frac{x}{|x|}\right))|\le C_\alpha r^{-|\alpha|}$. Combining this with \eqref{eq:rho-deriv} and Leibniz for $\pr^m(\rho(r)\chi\left(\frac{x}{|x|}\right))$, we obtain, for all integers $\ell\ge0$ and $r\ge R_1$,
\begin{equation}\label{eq:hpi-pointwise}
|\pr^\ell h_0(x)|\le C_\ell\,\ep_0\,\frac{r^{\frac{1-s}{2}-\ell}}{L(r)}, \qquad |\pr^\ell \pi_0(x)|\le C_\ell\,\ep_0\,\frac{r^{\frac{-1-s}{2}-\ell}}{L(r)}.
\end{equation}
Therefore, for $0\le \ell\le q+1$,
\[
\langle x\rangle^{\frac{s-4}{2}+\ell}|\pr^\ell h_0(x)| \les \ep_0\,\frac{r^{-\frac32}}{L(r)}\quad (r\ge R_1),
\]
and in polar coordinates on the fixed cone,
\[
\big\|\langle x\rangle^{\frac{s-4}{2}+\ell}\pr^\ell h_0\big\|_{L^2}^2 \les \ep_0^2\int_{R_1}^{\infty}\frac{dr}{r\,L(r)^2}\les\ep_0^2,
\]
where we used \eqref{eq:L-conditions} at the last step. Summing $\ell=0,\dots,q+1$ and adding the finite compact contribution from $r\le R_1$ gives $\|h_0\|_{H_b^{q+1,\frac{s-4}{2}}}\les \ep_0$, by \eqref{eq:L-conditions}. The same argument gives $\|\pi_0\|_{H_b^{q,\frac{s-2}{2}}}\les \ep_0$. This proves \eqref{upperbound}.
\paragraph{Lower bounds and $s'>s$ divergence.}
Fix $s'>s$ and set $\de:=s'-s>0$. On $\Om_o$ and $r\ge R_1$, $\chi\equiv 1$ and $\eta\equiv 1$, so $\phi_h,\phi_\pi$ are radial:
\[
\phi_h(r)=\ep_0\,\frac{r^{\frac{5-s}{2}}}{L(r)},\qquad \phi_\pi(r)=\ep_0\,\frac{r^{\frac{3-s}{2}}}{L(r)}.
\]
For radial $\phi$, the tensor $T^{ij}(\phi):=\pr^i\pr^j\phi-\de^{ij}\De\phi$ has a radial eigenvalue $-2\phi'(r)/r$, and hence $|T(\phi)(x)|\ge 2|\phi'(r)|/r$. Thus $|h_0|\ge 2|\phi_h'|/r$ and $|\pi_0|\ge 2|\phi_\pi'|/r$ on $\Om_o$.

Differentiate:
\[
\phi_h'(r)=\ep_0\,\frac{r^{\frac{5-s}{2}}}{L(r)}\Big(\frac{5-s}{2}\frac{1}{r}-\frac{L'(r)}{L(r)}\Big), \qquad \phi_\pi'(r)=\ep_0\,\frac{r^{\frac{3-s}{2}}}{L(r)}\Big(\frac{3-s}{2}\frac{1}{r}-\frac{L'(r)}{L(r)}\Big).
\]
By \eqref{eq:L-slowvar}, choose $R_2\ge R_1$ such that for all $r\ge R_2$, $r\frac{L'(r)}{L(r)}\le \frac12\min\{\frac{5-s}{2},\frac{3-s}{2}\}$. This yields
\[
|\phi_h'(r)|\gtrsim \ep_0\,\frac{r^{\frac{3-s}{2}}}{L(r)},\qquad |\phi_\pi'(r)|\gtrsim \ep_0\,\frac{r^{\frac{1-s}{2}}}{L(r)}\qquad (r\ge R_2),
\]
hence on $\Om_o\cap\{r\ge R_2\}$,
\[
|h_0(x)|\gtrsim \ep_0\,\frac{r^{\frac{1-s}{2}}}{L(r)},\qquad |\pi_0(x)|\gtrsim \ep_0\,\frac{r^{\frac{-1-s}{2}}}{L(r)}.
\]
Therefore, restricting the $L^2$ norms to $\Om_o\cap\{r\ge R_2\}$,
\[
\big\|\langle x\rangle^{\frac{s'-4}{2}}h_0\big\|_{L^2}^2 \ges \ep_0^2\int_{R_2}^{\infty}\frac{r^{\de-1}}{L(r)^2}\,dr=\infty, \qquad \big\|\langle x\rangle^{\frac{s'-2}{2}}\pi_0\big\|_{L^2}^2 \ges \ep_0^2\int_{R_2}^{\infty}\frac{r^{\de-1}}{L(r)^2}\,dr=\infty,
\]
by \eqref{eq:L-conditions}. This proves the sharpness for all $s'>s$.
\end{proof}
\begin{rk}[Examples of admissible borderline decay weight function $L$]
\label{rk:examples-L}
Define iterated logarithms by
\[
\log_{(1)}(t):=\log t,\qquad \log_{(j+1)}(t):=\log(\log_{(j)}(t)),\quad j\ge1.
\]
Fix $m\geq 1$ and let $1\leq j_0\leq m$. We introduce the exponents $(\b_1,\dots,\b_m)$ as follows:
\begin{align*}
\b_1=\b_2=\dots=\b_{j_0-1}=\frac{1}{2},\qquad \b_{j_0}>\frac{1}{2},\qquad \b_{j_0+1},\dots\b_m\in\RRR.
\end{align*}
Choose $R_*\geq 2$ so that
\begin{equation}\label{eq:log-iter-pos}
\log_{(j)}(2+r)\ge 2\qquad \forall\,1\leq j\leq m,\quad\forall\,r\geq R_\ast,
\end{equation}
and set for $r\ge R_\ast$,
\begin{equation}\label{eq:L-examples}
L(r):=\prod_{j=1}^{m}\Big(\log_{(j)}(2+r)\Big)^{\b_j}.
\end{equation}
Extend $L$ to $[0,R_\ast]$ as any positive nondecreasing $C^\infty$ function. It's easy to verify $L$ satisfies \eqref{eq:L-conditions}, \eqref{eq:Linv-symbol} and \eqref{eq:L-slowvar}. In particular, this includes $L(r)=(\log(2+r))^{\b_1}$ with $\b_1>\frac12$.
\end{rk}
\subsection{Proof of Theorem \ref{s=1initial}}
We denote
\begin{align*}
P(h,\pi)=(\pr_i\pr_jh^{ij},\pr_i\pi^{ij}),\quad\Phi(h,\pi)=\big(M(h,\pi),N^j(h,\pi)\big).
\end{align*}
Then, \eqref{MT1.1} becomes
\begin{align}\label{PPhi}
P(h,\pi)=\Phi(h,\pi).
\end{align}
Let $\Om=\Om_{\om,\th}$ be a cone and $(h_0,\pi_0)\in C^\infty(\RRR^3)$ be defined in Proposition \ref{proph0pi0}, which solves $P(h_0,\pi_0)=0$ and is supported in $\Om$. Let $S$ be the solution operator given by Proposition \ref{prop:S}. We consider the following fixed point problem
\begin{align}\label{fixedpointeq}
    (h_1,\pi_1)=S\Phi(h_0+h_1,\pi_0+\pi_1).
\end{align}
We define
\[
X:=\left\{(h,\pi)\Big/\,\|(h,\pi)\|_{H^{q+1,\frac{s-4}{2}}_b(\Om)\times H_b^{q,\frac{s-2}{2}}(\Om)}\leq\|(h_0,\pi_0)\|_{H^{q+1,\frac{s-4}{2}}_b(\Om)\times H_b^{q,\frac{s-2}{2}}(\Om)}\right\}.
\]
We have from Proposition \ref{proph0pi0}
\begin{align*}
\|(h_0,\pi_0)\|_{H^{q+1,\frac{s-4}{2}}_b(\Om)\times H_b^{q,\frac{s-2}{2}}(\Om)}\les\ep_0.
\end{align*}
According to Proposition \ref{MT9}, we have for $\ep_0$ small enough and $(h,\pi),(h',\pi')\in X$
\begin{align*}
\|\Phi(h_0+h,\pi_0+\pi)\|_{H_b^{q-1,\frac{s}{2}}(\Om)}&\les\ep_0^2,\\
\|\Phi(h_0+h,\pi_0+\pi)-\Phi(h_0+h',\pi_0+\pi')\|_{H_b^{q-1,\frac{s}{2}}(\Om)}&\les\ep_0\|(h,\pi)-(h',\pi')\|_X.
\end{align*}
By Proposition \ref{prop:S}, $S$ is bounded from $H_b^{q-1,\frac{s}{2}}(\Om)$ to $H_b^{q+1,\frac{s-4}{2}}(\Om)\times H_b^{q,\frac{s-2}{2}}(\Om)$. Thus, for $\ep_0$ small enough, the map
\begin{align*}
    F(h,\pi):=S\Phi(h_0+h,\pi_0+\pi):H^{q+1,\frac{s-4}{2}}_b(\Om)\times H_b^{q,\frac{s-2}{2}}(\Om)\to H^{q+1,\frac{s-4}{2}}_b(\Om)\times H_b^{q,\frac{s-2}{2}}(\Om)
\end{align*}
is a contraction on $X$. By the Banach fixed point theorem, there exists a unique fixed point $(h_1,\pi_1)\in X$ such that
\begin{align}
\begin{split}\label{h1pi1}
    S\Phi(h_0+h_1,\pi_0+\pi_1)&=(h_1,\pi_1),\\
    \|(h_1,\pi_1)\|_{H^{q+1,\frac{s-4}{2}}_b(\Om)\times H_b^{q,\frac{s-2}{2}}(\Om)}&\les\ep_0^2.
\end{split}
\end{align}
This implies that
\begin{align*}
    P(h_1,\pi_1)=PS\Phi(h_0+h_1,\pi_0+\pi_1)=\Phi(h_0+h_1,\pi_0+\pi_1).
\end{align*}
Denoting $h:=h_0+h_1$ and $\pi:=\pi_0+\pi_1$, we see that $(h,\pi)$ solves \eqref{hpieq} and satisfies
\begin{align*}
\|(h,\pi)\|_{H_b^{q+1,\frac{s-4}{2}}(\Om)\times H_b^{q,\frac{s-2}{2}}(\Om)}\les\ep_0.
\end{align*}
We now prove \eqref{noimprovements}. Let $s'>s$ and assume by contradiction that
\begin{align}\label{contradiction}
    (h,\pi)\in H_b^{q+1,\frac{s'-4}{2}}(\Om)\times H_b^{q,\frac{s'-2}{2}}(\Om).
\end{align}
Then, from Proposition \ref{MT9} we have $\Phi(h,\pi)\in H_b^{q-1,\frac{s'}{2}}$. Applying Proposition \ref{prop:S} and combining with \eqref{h1pi1}, we infer
\begin{align*}
    (h_1,\pi_1)=S\Phi(h,\pi)\in H_b^{q+1,\frac{s'-4}{2}}(\Om)\times H_b^{q,\frac{s'-2}{2}}(\Om).
\end{align*}
Recalling from Proposition \ref{proph0pi0} that $(h_0,\pi_0)\notin H_b^{q+1,\frac{s'-4}{2}}(\Om)\times H_b^{q,\frac{s'-2}{2}}(\Om)$, we deduce
\begin{align*}
    (h,\pi)=(h_0+h_1,\pi_0+\pi_1)\notin H_b^{q+1,\frac{s'-4}{2}}(\Om)\times H_b^{q,\frac{s'-2}{2}}(\Om),
\end{align*}
which contradicts \eqref{contradiction}. Combining with \eqref{hpiinverse}, we obtain \eqref{noimprovements}. This concludes the proof of Theorem \ref{s=1initial}.
\begin{rk}\label{rk:better-h1pi1}
In the proof of Theorem \ref{s=1initial}, the fixed point $(h_1,\pi_1)$ enjoys additional regularity beyond what is used above. More precisely:
 For $s\in[1,3)$ and $q\ge 4$,
    \[
    (h_1,\pi_1)\in H_b^{q+1,\frac{s-4}{2}}\times H_b^{q,\frac{s-2}{2}} \ \Longrightarrow\ (h_1,\pi_1)\in H_b^{\infty,\frac{s-4}{2}}\times H_b^{\infty,\frac{s-2}{2}},
    \]
    see {\cite[Proposition 7]{MaoTao}}.
\end{rk}
\begin{rk}\label{rk:better-h1pi1'}
In contrast to our linear seed data construction in Proposition \ref{proph0pi0}, the approach of \cite{MaoTao} toward Carlotto’s conjecture {\cite[Open Problem 3.18]{Car21}} is to build a compactly supported linear seed $(h_0,\pi_0)\in C_c^\infty(\RRR^3)$. Such choice allow decay improvement for $(h_1,\pi_1)$. In particular, if $s\in(1,3)$ and $q\geq 4$, then
\[
(h_1,\pi_1)\in H_b^{q+1,\frac{s-4}{2}}\times H_b^{q,\frac{s-2}{2}}
\quad \Longrightarrow \quad
(h_1,\pi_1)\in H_b^{\infty,-\frac12}\times H_b^{\infty,\frac12},
\]
see {\cite[Proposition 8]{MaoTao}} for more details. This follows from the fact that, for $s>1$, the nonlinear terms are of strictly lower homogeneity than the linear ones, allowing for an iterative regularity improvement. In contrast, this argument breaks down in the borderline case $s=1$, where the nonlinear terms have the same homogeneity as the linear terms.
\end{rk}

\end{document}